\newcommand{\cal}{\mathcal}
\newtheorem{problem}{Problem}
\newenvironment{proof}[1]{{\bf Proof}:\ #1}{\null\hfill $\blacksquare$}
\newcommand{\ShowProofs}[1]{\ifthenelse{\boolean{Proofs}}{#1}{}}
\newcommand{\yong}[1]{{\color{black}#1}}
\renewcommand{\ricardo}[1]{{\color{black}#1}}
\renewcommand{\P}{{\cal P}}
\begin{document}

\begin{frontmatter}

\title{On Minimum-time Paths of Bounded Curvature with Position-dependent Constraints}

\author[First]{Ricardo G. Sanfelice}
\author[Second]{Sze Zheng Yong}
\author[Second]{Emilio Frazzoli}

\address[First]{Department of Aerospace and Mechanical Engineering, University of Arizona
1130 N. Mountain Ave, AZ 85721, USA (Tel: 520-626-0676; e-mail: sricardo@u.arizona.edu).}
\address[Second]{Laboratory for Information and Decision Systems, Massachusetts Institute of Technology, Cambridge, MA 02139-4307, USA (e-mail: szyong@mit.edu, frazzoli@mit.edu).}

\begin{abstract}
\noindent
We consider the problem of a particle traveling from an initial configuration to a final configuration (given by a point 
in the plane along with  a prescribed velocity vector) in minimum time with 
\ricardo{non-homogeneous velocity and with constraints} on the minimum turning radius of the particle over multiple regions of the state space.
Necessary conditions for optimality of these paths are derived to characterize the nature of optimal paths,
both when the particle is inside a region and when it crosses boundaries between neighboring regions.
These conditions are used to characterize families of optimal and nonoptimal paths.
Among the optimality conditions, we derive a 
``refraction'' law at the boundary of the regions that generalizes the so-called
Snell's law of refraction in optics to the case of paths with bounded 
curvature. 
Tools employed to deduce our results include recent principles of optimality for hybrid systems. 
The results are validated numerically.
\end{abstract}
\end{frontmatter}

\section{Introduction}
\label{sec:Introduction}

\subsection{Background}
\label{sec:Preliminaries}

Pontryagin's Maximum Principle \cite{Pontryagin62} is a very powerful tool to derive
necessary conditions for optimality of solutions to a dynamical system. 
In other words, this principle
establishes the existence of an adjoint function with the property that, 
along optimal system solutions,
the Hamiltonian obtained by combining the system dynamics and the cost function 
associated to the optimal control problem is minimized.
In its original form, this principle is applicable to optimal control problems with 
dynamics governed by differential equations with continuously differentiable
right-hand sides. 

The shortest path problem between two points with specific tangent directions
and bounded maximum curvature has received wide attention in the literature. 
In his pioneer work in \cite{Dubins57}, by means of geometric
arguments, Dubins showed that optimal paths to this problem 
consist of a concatenation of no more than three pieces, each of them
describing either a straight line, denoted by $\L$, or a circle, denoted by $\C$
(when the circle is traveled clockwise, we label it as $\C^+$,
while when the circle is traveled counter-clockwise, 
$\C^-$),
and are either of type $\C\C\C$ or $\C\L\C$, that is, 
they are among the following six types of paths
\begin{eqnarray}\label{eqn:DubinsPaths}
\C^-\C^+\C^-,
\C^+\C^-\C^+,
\C^-\L\C^-,
\C^+\L\C^+,
\C^+\L\C^-,
\C^-\L\C^+
\end{eqnarray}
in addition to any of the subpaths obtained when some of the pieces (but not all)
have zero length. More recently, the authors in \cite{BoissonnatCerezoLeblond94}
recovered Dubins' result by using Pontryagin's Maximum Principle; see also
\cite{SussmannTang91}. Further investigations of the properties of optimal paths to this problem and 
other related applications of 
Pontryagin's Maximum Principle include
\cite{ShkelLumelsky01,BalkcomMason02IJRR,ChitsazLaValleBalkcom06},
to just list a few.

\subsection{Contributions}

We consider the minimum-time problem of having time-parameterized paths with bounded curvature for a particle, 
which, as in the problem by Dubins, travels from a given initial point to a final point with specified velocity vectors, but
with non-homogeneous traveling speeds and curvature constraints: the velocity of the particle and the 
minimum turning radius are possibly different at certain regions of the state space.
(Note that since the velocity of the particle in the problem by Dubins is constant, the minimum-length and minimum-time problems are equivalent; while
 the problem with different velocities and curvature constraints is most interesting for the minimum-time case.)
Such a heterogeneity arises in several robotic motion planning problems across environments with obstacles, 
different terrains properties, and other topological constraints. Current results for optimal control under heterogeneity,
which include those in \cite{AlexanderRowe90,RoweAlexander00},
are limited to particles describing straight paths.
Furthermore, optimal control problems exhibiting such discontinuous/impulsive behavior
cannot be solved using the classical Pontryagin's Maximum Principle. 
Extensions of this principle to systems with discontinuous right-hand side appeared
in \cite{Sussmann97} while extensions to hybrid systems include
\cite{Sussmann99}, \cite{GaravelloPiccoli05}, and \cite{ShaikhCaines07}.
These principles establish the existence of an adjoint function
which, in addition to conditions that parallel the necessary optimality conditions
in the principle by Pontryagin, satisfies certain conditions at times of discontinuous/jumping behavior.
The applicability of these principles to relevant problems have been highlighted in
\cite{Sussmann99,Piccoli99,DApiceGaravelloManzoPiccoli03IJC}. 
These will be the key tools in deriving the results in this paper.

Building from preliminary results in \cite{Sanfelice.08.HSCC.Dubins} and exploiting 
recent principles of optimality for hybrid systems,
we establish necessary conditions for optimality of paths of particles with bounded curvature traveling across
a state space that is partitioned into multiple regions, each with a different velocity and minimum turning radius. Necessary conditions for optimality of these paths are derived to characterize the nature of optimal paths,
both when the particle is inside a region and when it crosses boundaries between neighboring regions.
A ``refraction'' law at the boundary of the regions that generalizes the so-called
Snell's law of refraction in optics to the case of paths with bounded 
curvature is also derived. The optimal control problem with a ``refraction" law at the boundary can be viewed as an extension of optimal control problems in which the terminal time is governed by a stopping constraint, as considered in \cite{Lin.et.al.11,Lin.et.al.12}. The necessary conditions we derived
also provide a novel alternative to optimizing a switched system without directly optimizing the switching times as decision variables, as is commonly done in a vast majority of papers dealing with switched system optimization, e.g. \cite{Wu.Teo.06,Jiang.Teo.et.al.12}.
Applications of these results include optimal motion planning of autonomous vehicles 
in environments with obstacles, different terrains properties, and other topological constraints.
Strategies that steer autonomous vehicles across heterogeneous terrain using 
Snell's law of refraction have already been recognized in the literature 
and applied to point-mass vehicles; see, e.g.,
\cite{AlexanderRowe90,RoweAlexander00}, and more recently, \cite{KwokMartinez.10.ACC}. 
Our results extend those to the case of autonomous vehicles with 
Dubins dynamics, consider the case when the state space is partitioned into finitely many regions,
and allow for the velocity of travel and minimum turning radius to change in each region.
The results are validated numerically using the software package GPOPS \cite{Rao.10}.

The organization of the paper is as follows.
Section~\ref{sec:ProblemStatement} states the problem of interest and outlines the 
solution approach.  
Section~\ref{sec:Results} presents the main results:
necessary conditions for optimality of paths,
refraction law at the boundary of the regions, and characterization
of families of optimal and nonoptimal paths.
The results are validated numerically in Section~\ref{sec:Example}.

\subsection{Notation}
\label{sec:Notation}

We use the following notation throughout the paper. 
$\reals^{n}$ denotes $n$-dimensional Euclidean space.
$\reals$ denotes the real numbers.
$\realsgeq$ denotes the nonnegative real numbers, i.e.,
$\realsgeq=[0,\infty)$.
$\nats$ denotes the natural numbers including $0$, i.e.,
$\nats=\left\{0,1,\ldots \right\}$. Given $k \in \nats$, $\nats_{\leq k}$ 
denotes $\{0,1,\ldots,k\}$ and, if $k > 0$, 
$\nats_{< k}$ 
denotes $\{0,1,\ldots,k-1\}$.
Given a set $S$, $\ol{S}$ denotes its closure, $S^{\circ}$ denotes its interior,
and $\partial S$ denotes its boundary.
Given a vector $x\in \reals^n$, $|x|$ denotes the Euclidean vector norm.
Given vectors $x$ and $y$,
at times, we write $[x^\top, y^\top]^\top$
with the shorthand notation $(x,y)$. 
\ricardo{Given a function $f$, its domain is denoted by $\dom f$}.
Given $\umax_i >0$ defining $U_i := [-\umax_i,\umax_i]$,
$\U_i$ denotes the set of all piecewise-continuous functions $u$ 
from subsets of $\realsgeq$ to $U_i$.
The inner product between vectors $u$ and $v$ is denoted by
$\langle u, v \rangle$.
A unit vector with angle $\theta$ is denoted by $\angle \theta$.

\section{Problem Statement and Solution Approach}
\label{sec:ProblemStatement}

We are interested in deriving necessary conditions for a path $\X$ describing the motion of 
a particle, which starts and ends at pre-established points with particular velocity vectors,
\yong{through} regions $\P_q$ with different constant velocity $v_q$ and minimum turning radius $r_q$. 
The  dynamics of a particle with 
position $(x,y) \in \reals^2$ and orientation $\theta \in \reals$ (with respect to the vertical axis) are given by
\begin{eqnarray}
\label{eqn:Dubins}
\begin{array}{ccl}
\xdot\ =\ v_q \sin \ang, \ \
\ydot\ =\ v_q \cos \ang, \ \
\angdot\ = \ u,
\end{array}
\end{eqnarray} 
where
$u \in U_q$ is the angular velocity input and satisfies
$|u| \leq \overline{u}_q := \frac{v_q}{r_q}$.
 The velocity vector of the particle is given by the vector
 $\left[
 v_q \sin \ang,\ 
v_q \cos \ang 
 \right]^\top$.
More precisely, we are interested in the following problem:

\begin{problem}
\label{problem:NRegions}
Given a connected set $\P\subset \reals^2$, $N$ disjoint 
polytopes
 $\P_1, \P_2, \ldots, \P_N$, subsets of $\P$, with 
nonempty interior and such that $\P = \cup_{q \in \Pset} \P_q$, determine
necessary conditions on the minimum-time path $\X \subset \P$ of a particle starting at a point $(x,y)^i \in \P_{q^i}^{\circ}$, $q^i \in \Pset$, with initial velocity vector $\nu^i$,
traveling according to \eqref{eqn:Dubins}, and ending at a point
$(x,y)^f \in \P_{q^f}^{\circ}$, $q^f \in \Pset$, with final velocity vector $\nu^f$,
where, for each $q \in \Pset$, $v_q > 0$
and $r_q>0$ are the velocity of travel and 
the minimum turning radius in $\P_q$, respectively.
\fintriangle
\end{problem}

In addition to Problem~\ref{problem:NRegions},
we consider the special case when the angular velocity constraints on neighboring 
regions have common bounds $\overline{u}_q$. We refer
to the resulting problem as
Problem~\ref{problem:NRegions}$\star$.

Figure~\ref{fig:NPatches} depicts the general scenario in Problem~\ref{problem:NRegions}.
Neighboring regions are such that either their velocity of travel, their minimum turning
radius, or both are different from each other.
In this way, the number of regions with different characteristics is irreducible.

\begin{figure}[h!]  
  \begin{center}  
     \psfrag{M}[][][0.9]{$\boundary$}
     \psfrag{zi}[][][0.8]{$(x,y)^i$}
     \psfrag{vi}[][][0.8]{$\nu^i$}
     \psfrag{r1}[][][0.8]{$r_1$}
     \psfrag{r2}[][][0.8]{$r_2$}
     \psfrag{r3}[][][0.8]{$r_3$}
     \psfrag{v1}[][][0.8]{$v_1$}
     \psfrag{v2}[][][0.8]{$v_2$}
     \psfrag{v3}[][][0.8]{$v_3$}
     \psfrag{zf}[][][0.8]{\hspace{0.1in}$(x,y)^f$}
     \psfrag{vf}[][][0.8]{$\nu^f$}
     \psfrag{1}[][][0.8]{${\cal P}_1$}
     \psfrag{2}[][][0.8]{${\cal P}_2$}
     \psfrag{3}[][][0.8]{${\cal P}_3$}
     \psfrag{y=0}[][][0.8]{}
     \psfrag{t0}[][][0.8]{}
     \psfrag{t1}[][][0.8]{}
    {\includegraphics[width=.21\textwidth]{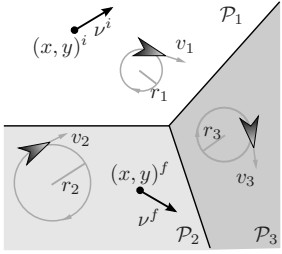}}  
  \end{center}  
\caption{Initial point $(x,y)^i$ and final point $(x,y)^f$ with given velocity vectors on regions ${\cal P}_1$ and ${\cal P}_2$. The minimum turning radius in region $\P_1$ is smaller than the one in region $\P_3$, which is smaller than the one in region $\P_2$ as denoted by the depicted paths with minimum turning radius. \label{fig:NPatches}}
\end{figure}

Our approach to derive a solution to Problem~\ref{problem:NRegions} is as follows.
Given a continuously differentiable curve $\X \subset \P$ defining the path of a particle starting at a point $(x,y)^i \in \P^{\circ}_{\ricardo{q^i}}$, $\ricardo{q^i} \in \Pset$, 
with initial velocity vector $\nu^i$ 
and ending at a point $(x,y)^f \in \P^{\circ}_{\ricardo{q^f}}$, $\ricardo{q^f} \in \Pset$, with final velocity vector $\nu^f$,
we partition the curve $\X$ into a finite number of curves $\X_j$, $j \in \{0,1,\ldots,J-1\} (= \naturals_{< J})$, 
such that, for each $j \in \naturals_{< J}$, $\X_j \in \P_{q_j}$ for some $q_j \in \Pset$,
where, for each $j \in \naturals_{< J}$, $q_{j} \not = q_{j+1}$.
This partition is such that the velocity of the particle along each curve $\X_j$ is constant
and given by $v_{q_j}$ while the minimum turning radius is also constant and given by $r_{q_j}$.
For each $j \in \naturals_{< J}$, let $t_j\geq 0$ denote the time at which the
particle starts the path defined by the curve $\X_j$ and let $t_J$ denote the 
time at which the particle reaches the \yong{end} point $(x,y)^f$.

\ricardo{To formally define trajectories to \eqref{eqn:Dubins} that are solutions to Problem~\ref{problem:NRegions},
we conveniently}
parametrize $\x$, $\y$, $\ang$, \ricardo{and $q$} by functions defined for each $j \in \naturals_{< J}$ on $[t_j,t_{j+1}]\times\{j\}$, 
where, if necessary, the value at the boundaries (left or right) of the interval $[t_j,t_{j+1}]$ are obtained by taking 
(right or left, respectively) limit of the functions. 
\ricardo{The second argument $j$ specifies the number of 
crossings between regions.  With this parametrization, the 
current region of the particle
at time $t$ and after $j$ crossings
is given by the value of $q$ at $(t,j)$, that is, $q(t,j)$.}
Hence, the domain of $\x$, $\y$, $\ang$, \ricardo{ and $q$} is given by the following subset of $\realsgeq \times \naturals$:
\begin{eqnarray}\label{eqn:htd}
E:=\bigcup_{j \in \naturals_{< J} } \left( [t_j,t_{j+1}]\times\{j\} \right)
\end{eqnarray}
on which, for each $j \in \naturals_{< J}$, 
the function $(t,j) \mapsto [\x(t,j),\  \y(t,j),\ \ang(t,j)\ricardo{, \ q(t,j)}]^\top$ is absolutely continuous and
for each $(t_j,j) \in E$, $j \in \{1,2,\ldots,J-1\}$, $[\x(t_{\ricardo{j+1}},j+1),\  \y(t_{\ricardo{j+1}},j+1),\ \ang(t_\ricardo{j+1},j+1)]^\top = [\x(t_{\ricardo{j+1}},j),\  \y(t_{\ricardo{j+1}},j),\ \ang(t_{\ricardo{j+1}},j)]^\top$, i.e.,
the position and angle of the vehicle do not jump when changing regions, 
\ricardo{while $q(t_{j+1},j+1)$ is equal to the new region's index.}\footnote{This particular construction of time domain is borrowed from \cite{Hybrid01}, where it is called a (compact) {\em hybrid time domain}. }

With this re-parametrization of the path $\X$, we employ the principle of optimality for hybrid systems
in \cite{Sussmann99} (see also \cite{Sussmann99LNCIS} and \cite{Piccoli99})
to derive necessary conditions for the curve $\X$ to be optimal. To this end, we 
associate the re-parametrized curve $\X$ with a solution to a hybrid system, which via results
in \cite{Sussmann99}, infer conditions for optimality. More precisely, $[\x,\ \y,\ \ang]^\top$ along with 
a function $q$ with domain \eqref{eqn:htd}, which takes discrete values $1,2,\ldots,N$ 
when the velocity of the particle is equal to $v_1,v_2,\ldots,v_N$ and the minimum turning
radius is equal to $r_1,r_2,\ldots,r_N$, respectively, define a solution 
to a hybrid system.  The continuous evolution
of $[\x,\ \y,\ \ang]^\top$
is given
as in \eqref{eqn:Dubins}
with
$q$ being a function that is constant during periods of flow, 
but when the particle crosses from one region to another, $q$ is updated to indicate 
the region associated with the current velocity and minimum turning radius.
The angular velocity input
$u$
is designed to satisfy the constraint $|u|\leq \overline{u}_{q}$ 
for the value of $q$ corresponding to the current region of travel.

The following assumptions are imposed on some of the results to follow.
For notational simplicity in the following sections, we define $\zcont := [\x,\ \y,\ \ang]^{\top}$ and $Q:=\Pset$.
\begin{assumption}
\label{assump:OptimalTrajectories}
Given a solution $(\zcont,q)$ to Problem~\ref{problem:NRegions}, 
the following holds:
\begin{enumerate}
\item\label{assump:OptimalTrajectories1}
The set of points $\defset{t}{ (t,j) \in \dom (\xi,q), (x(t,j),\right.$ \\$\left.y(t,j)) \in \partial \P_{q(t,j)}}$ 
has zero Lebesgue measure.
\item\label{assump:OptimalTrajectories2}
For every $(t,j)$ such that $(t,j+1) \in \dom (\xi,q)$,
there exist unique $p, p' \in Q$ such that $(x,y)(t,j) \in \partial \overline{\P}_{p} \cap \partial \overline{\P}_{p'}$
and the boundaries $\partial \overline{\P}_{p}$ and $\partial \overline{\P}_{p'}$
are locally smooth at $(x(t,j),y(t,j))$.
\end{enumerate}
\end{assumption}
A solution $(\zcont,q)$ to Problem~\ref{problem:NRegions}
satisfying condition \ref{assump:OptimalTrajectories1} in Assumption~\ref{assump:OptimalTrajectories}
does not slide on the boundary of the regions
while, if it satisfies condition \ref{assump:OptimalTrajectories2},
then it crosses 
at points where only two regions exist with 
their boundaries defined by a smooth curve.

\section{Properties of Optimal Paths}
\label{sec:Results}

\subsection{Necessary conditions for optimality}
\label{sec:OptimalControl}

The following lemma establishes basic conditions that the state variables $\zcont$
and associated adjoint satisfy for the case of two regions $\P_1$ and $\P_2$.
It employs \cite[Theorem 1]{Sussmann99}, which, 
under further technical assumptions, 
establishes that
there exists an {\em adjoint pair} $(\adj,\adj_{\circ})$, where 
$\adj$ is a function
and $\adj_{\circ}$ is a constant, which, along optimal solutions to
Problem~\ref{problem:NRegions},
 satisfies certain {\em Hamiltonian maximization}, {\em nontriviality}, 
{\em transversality}, and {\em Hamiltonian value conditions}. Note that the restriction that the boundary of the switching surfaces is a subset of \ricardo{$\defset{(x,y) \in \reals^2}{ y = 0 }$} in the following lemma will be relaxed in the ensuing discussion.

\begin{lemma}
\label{propo:PropertiesTwoRegions}
For each optimal solution $(\zcont,q)$ to 
 Problem~\ref{problem:NRegions} 
with optimal control $u$, minimum transfer time $T$, number of jumps $J-1$,
number of regions $N = 2$, and 
($\overline{\P}_{1} \cap \overline{\P}_{2})\cap \P \subset \defset{(x,y) \in \reals^2}{ y = 0 }$
satisfying Assumption~\ref{assump:OptimalTrajectories}\footnote{\yong{An optimal solution $(\xi,q)$ uniquely determines $T$ and $J$.}},
there exists a function
$\adj:\dom \adj \to \ODubins$, $\adj:= [\adjx,\ \adjy,\ \adjang]^{\top}$, $\dom \adj = \dom \HSvsol$,
where $t \mapsto \adj(t,j)$ is absolutely continuous for each $j$, $(t,j)\in \dom \adj$,
and a constant $\adj_{\circ}\in \reals$
defining the adjoint pair $(\adj,\adj_{\circ})$
satisfying
\begin{itemize}
\item[a)] $\dot{\adj}(t,j) = -\displaystyle\frac{\partial H_{q(t,j)}}{\partial \zcont}(\zcont(t,j),\adj(t,j),\adj_{\circ},u(t,j))$ and $\adj_{\circ}\geq 0$
for almost every $t \in [t_j,t_{j+1}]$, $(t,j) \in \dom \adj$,
where, for each $q\in Q$, 
$H_q$
is the Hamiltonian associated with the continuous dynamics of the hybrid system $\HS$, which
is given by
$$H_q(\zcont,\adj,\adj_{\circ},u)  = \adjx v_{q} \sin \ang+\adjy v_{q} \cos \ang + \adjang u - \adj_{\circ}.$$
\item[b)] There exist $\overline{\adjx},\overline{\adjy} \in \reals$ and, for each $j \in \nats_{\leq J}$, 
there exists $\p_j \in \reals$
such that
$\adjx(t,j):= \overline{\adjx}$ for all $(t,j) \in \dom \HSvsol$, $\adjy(t,j):= \overline{\adjy}+p_j$ for almost all $t \in [0,T]$, $(t,j) \in \dom \HSvsol$,
and $\adjang(t,j)=\adjang(t,j+1)$ 
for each $(t,j)$ such that $(t,j),(t,j+1) \in \dom \adj$.
\item[c)] The control $\u$ is equal to $\umax_{q(t,j)}$
when $\gamma(t,j)>0$, $-\umax_{q(t,j)}$
when $\gamma(t,j)<0$, and $0$
when $\gamma(t,j)=0$.
\item[d)] 
For every $j$ such that there exists an interval
$I_j$ with nonempty interior, $I_j \times \{j\} \subset \dom \HSvsol$, such that
 $\adjang(t,j) = 0$ for each $t \in I_j$, then $\adjy(t,j) \tan \ang(t,j) = \adjx(t,j)$ for each $t \in \ol{I_j}$.
 \item[e)] There exists $c \in \reals$ such that for every $(t,j) \in \dom \HSvsol$
\begin{eqnarray}\label{eqn:HamiltonianCondition}
\begin{array}{ll}
H_{q(t,j)}(\zcont(t,j),\adj(t,j),\adj_{\circ},\u(t,j)) = c\ .
\end{array}
\end{eqnarray}
\end{itemize}
\end{lemma}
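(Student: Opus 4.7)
The overall strategy is to view $(\zcont,q)$ as an optimal trajectory of the hybrid system described in Section~\ref{sec:ProblemStatement} and apply the Hybrid Maximum Principle of \cite{Sussmann99} (Theorem~1) with switching manifold $\{y=0\}$. This yields the adjoint pair $(\adj,\adj_\circ)$ together with the Hamiltonian maximization, nontriviality, transversality and Hamiltonian value conditions. Each item (a)--(e) of the lemma then follows by specializing these generic conditions to the Dubins-type dynamics \eqref{eqn:Dubins} and the particular geometry of the switching surface. The nontrivial piece will be (b), because it requires translating the transversality / jump conditions at $\{y=0\}$ into the precise statement that only $\adjy$ jumps while $\adjx$ and $\adjang$ are preserved; the other items are essentially symbolic computations from the Hamiltonian.

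\textbf{Steps (a)--(c).} For (a), I simply compute $\partial H_q/\partial \zcont$ from
\[
H_q(\zcont,\adj,\adj_\circ,u)=\adjx v_q\sin\ang+\adjy v_q\cos\ang+\adjang u-\adj_\circ,
\]
which gives $\dot{\adjx}=0$, $\dot{\adjy}=0$, and $\dot{\adjang}=-\adjx v_q\cos\ang+\adjy v_q\sin\ang$ on each interval of constant $q$; nonnegativity of $\adj_\circ$ is the standard sign convention of the maximum principle. For (c), since $H_q$ is affine in $u$ with coefficient $\adjang$ and $u\in[-\umax_q,\umax_q]$, the Hamiltonian maximization condition forces $u=\umax_q\,\mathrm{sign}(\adjang)$ when $\adjang\neq 0$ and leaves $u$ free (and here taken to be $0$ on the intended singular arcs) when $\adjang=0$; here $\gamma$ is to be read as $\adjang$, as in part (a).

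\textbf{Step (b): the jump analysis.} This is the main obstacle. During flow, $\dot{\adjx}=\dot{\adjy}=0$ so $\adjx$ and $\adjy$ are piecewise constant, and by (a) $\adjang$ is absolutely continuous on each interval $[t_j,t_{j+1}]$. At a crossing time $t=t_{j+1}$ the hybrid maximum principle of \cite{Sussmann99} gives a jump condition of the form $\adj(t_{j+1},j)-\adj(t_{j+1},j+1)\in \mathrm{span}\{\nabla S(\zcont(t_{j+1},j))\}$, where $S$ is a defining function of the switching surface. Here the switching surface is $\{y=0\}$ inside $\P$, so $\nabla S=(0,1,0)^\top$ in the $(x,y,\ang)$ coordinates. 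This forces the $x$- and $\ang$-components of $\adj$ to be continuous across the jump, while the $y$-component may have an arbitrary jump. Combined with $\dot{\adjx}=0$ on flows, continuity of $\adjx$ across jumps yields a single constant $\overline{\adjx}$ globally; with $\dot{\adjy}=0$ on flows and free jumps at each crossing, $\adjy$ has the form $\overline{\adjy}+p_j$ on the $j$-th interval for some jumps $p_j\in\reals$; and continuity of $\adjang$ across crossings is exactly $\adjang(t_{j+1},j)=\adjang(t_{j+1},j+1)$.

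\textbf{Steps (d) and (e).} For (d), if $\adjang\equiv 0$ on an interval $I_j\times\{j\}\subset\dom\HSvsol$, then differentiating and using (a) gives $\dot{\adjang}=-\adjx v_q\cos\ang+\adjy v_q\sin\ang=0$ on $I_j$; since $v_q>0$ and $\cos\ang\neq 0$ on a set of full measure (otherwise $\ang\equiv\pi/2\pmod\pi$, which together with \eqref{eqn:Dubins} contradicts the nontriviality of the optimal motion), dividing by $v_q\cos\ang$ yields $\adjy\tan\ang=\adjx$; continuity extends this equality to the closure $\overline{I_j}$. Finally, for (e), since $H_q$ has no explicit time dependence, differentiating along the dynamics and using (a) shows that $H_{q(t,j)}(\zcont,\adj,\adj_\circ,u)$ is constant on each flow interval, while the Hamiltonian value condition of \cite{Sussmann99} at the autonomous switching surface $\{y=0\}$ gives $H_{q(t_{j+1},j)}=H_{q(t_{j+1},j+1)}$ across each crossing; chaining these identities produces a single constant $c$ satisfying \eqref{eqn:HamiltonianCondition}.
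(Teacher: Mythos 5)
Your proposal follows essentially the same route as the paper's proof: both cast $(\zcont,q)$ as a trajectory of the hybrid system with switching surface $\{y=0\}$, invoke Theorem~1 of \cite{Sussmann99} to obtain the adjoint pair, read off the adjoint dynamics and the bang--singular structure from the Hamiltonian, use the jump/transversality condition at the boundary to conclude that only $\adjy$ can jump while $\adjx$ and $\adjang$ are continuous, and obtain item~e) from the Hamiltonian value condition. The only cosmetic difference is that the paper derives the jump condition from the polar $K^{\perp}$ of the Boltyanskii approximating cone to the switching set $\{(\zcont,\zcont):\y=0\}$, whereas you state it as ``the adjoint jump lies in the span of the normal to $\{y=0\}$''; these give the identical conclusion here.

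The one place your argument falls short of what item~c) actually claims is the singular arc. The lemma asserts, as a \emph{necessary} condition, that $u=0$ whenever $\adjang=0$; you instead say the maximization ``leaves $u$ free'' there and that $u$ is ``taken to be'' zero, which treats the value as a convention rather than a consequence. The paper closes this by the standard singular-arc computation: on an interval where $\adjang\equiv 0$, differentiating the switching function twice gives $\ddot{\adjang}=(\adjx v_q\sin\ang+\adjy v_q\cos\ang)\,u=0$, and since the factor multiplying $u$ equals $c+\adj_{\circ}$ by the Hamiltonian value condition on such an arc, this forces $u=0$ (in the nondegenerate case). You should add that step; everything else matches the paper's argument.
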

\begin{proof}
For each $q \in Q:=\{1,2\}$, let $\M_q := \ODubins$, $J_q := \reals$, 
and recall the definition of $U_q = [-\overline{u}_q,\overline{u}_q]$ from Section~\ref{sec:Notation},
where $\U_q$ is
the set of all piecewise-continuous functions from subsets of $\realsgeq$ to $U_q$.
Let ${\cal S}$ be a subset of $\hat{\M}^2:=\left(\cup_{q \in Q}\left(\{q\}\times\M_q\right)\right) \times \left(\cup_{q \in Q}\left(\{q\}\times\M_q\right)\right)$ given by
\begin{eqnarray}
{\cal S} := \defset{(q,\zcont,q',\zcont)}{\y=0, (q,\zcont,q',\zcont)\in\hat{\M}^2}
\end{eqnarray}
and, for each $q, q' \in Q$, let
\begin{eqnarray}\label{eqn:SwitchingSet}
{\cal S}_{q,q'} := \defset{(\zcont,\zcont)}{\y=0, \zcont\in\ODubins}\ .
\end{eqnarray}
Then, the above definitions determine a hybrid system following the framework in \cite{Sussmann99}. 
We denote this hybrid system by $\HS$. Its continuous dynamics are given by
$\dot{\zcont}=f_q(\zcont,u)$, where $f_q(\zcont,u):=[v_q\cos\ang,\ \ v_q\sin\ang,\ \ u]^{\top}$ and discrete dynamics given by the switching sets ${\cal S}_{q,q'}$.
By construction, the functions $\x,\y,\ang,$ define a solution $\HSvsol$ to $\HS$ on $E$ with input $u$. 
This follows from the fact that 
jumps of $\HSvsol$ occur at different times $t_j$, $j \in \{1,2,\ldots,J-1\}$, with $t_j \in (0,t_J)$;
$\HSvsol$ is absolutely continuous on each $[t_j,t_{j+1}]\times\{j\}$ for
each $j \in \{0,1,\ldots,J-1\}$;
$\zcont$ and $u$ satisfy the differential equation $\dot{\zcont}=f_q(\zcont,u)$ for almost all $[t_j,t_{j+1}]\times\{j\}$ for
each $j \in \{0,1,\ldots,J-1\}$;
$q$ is constant during flows; 
and $\HSvsol$ satisfies the switching condition 
\begin{eqnarray}\label{eqn:SwitchingOfSolution}
\begin{array}{ll}
(q(t_j,j),\zcont(t_j,j),q(t_j,j+1),&\zcont(t_j,j+1))\\& \in   {\cal S}_{q(t_j,j),q(t_j,j+1)} 
\end{array}
\end{eqnarray}
for each $(t_j,j) \in E$, $j>0$.
Then, for each $j \in \{0,1,\ldots,J-1\}$, the functions
$\zcont_{j+1}:[t_{j},t_{j+1}]\to\ODubins$, $q_{j+1}:[t_{j},t_{j+1}]\to Q$, $u_{j+1}:[t_{j},t_{j+1}]\to U_{q_{j+1}}$ 
given by $\zcont_{j+1}(t) := \zcont(t,j)$, $q_{j+1}(t) := q(t,j)$, and $u_{j+1}(t) := u(t,j)$
for each $(t,j) \in E$ define a solution $(\zcont_1, \zcont_2, \ldots, \zcont_{J}), (q_1, q_2, \ldots, q_{J})$ 
to $\HS$ as in \cite[Definition 3]{Sussmann99} with control input $(u_1,u_2,\ldots,u_{J})$.
Then, for $\HSvsol$ and $u$ to be optimal, the solution and input need to satisfy the necessary conditions for optimality in 
\cite[Theorem 1]{Sussmann99}. These establish items a)-d) as shown below. 

By \cite[Theorem 1]{Sussmann99}, there exist a piecewise absolutely continuous function
$\adj:\dom \adj \to \ODubins$, $\adj:= [\adjx,\ \adjy,\ \adjang]^{\top}$, with domain $E$
and a constant $\adj_{\circ}\in \reals$ defining the adjoint pair $(\adj,\adj_{\circ})$
satisfying
\begin{eqnarray}\label{eqn:AdjointCondition}
\adj_{\circ} \geq 0,  \dot{\adj}(t,j) = -\frac{\partial H_{q(t,j)}}{\partial \zcont}(\zcont(t,j),\adj(t,j),\adj_{\circ},u(t,j))
\end{eqnarray}
for almost every $t \in [t_j,t_{j+1}]$, $(t,j) \in E$, where, for each $q$,
$H_q$ is given by
\begin{eqnarray}
H_q(\zcont,\adj,\adj_{\circ},u) := \langle \adj,f_q(\zcont,u)\rangle - \adj_{\circ}L_q(\zcont,u)
\end{eqnarray}
with $L_q(\zcont,u) \equiv 1$.
This establishes item a). 

From \eqref{eqn:AdjointCondition}, it follows that $\adj$ satisfies
\begin{eqnarray}\label{eqn:AdjointDynamics}
\dot{\adjx} = 0, \qquad \dot{\adjy} = 0, \qquad \dot{\adjang} = -\adjx v_q \cos \ang + \adjy v_q \sin \ang\ .
\end{eqnarray}
Then, $\adjx$ and $\adjy$ are piecewise constant. 
By \cite[Theorem 1]{Sussmann99}, for each $(t_j,j) \in E$, $j \in \{1,2,\ldots,J-1\}$, $\adj$
satisfies
\begin{eqnarray}\label{eqn:JumpConditionCone}
(-\adj(t_j,j),\adj(t_j,j+1)) \in K^{\perp}_{j}\ ,
\end{eqnarray}
where $K^{\perp}_j$ is the polar of the Boltyanskii approximating cone to ${\cal S}_{q(t,j),q(t,j+1)}$
at the jump at $(t,j)$.
\footnote{Given a subset $S$ of a smooth manifold $\M$, the Boltyanskii approximating cone to $S$ at a point
$x \in \M$ is a closed convex cone $K$ in the tangent space $T_x \M$ to $\M$ at $x$ such that there exists a 
neighborhood $V$ of $0$ in $T_x \M$ and a continuous map $\mu:V\cap K \to \M$ with the property that $\mu(V\cap K)\subset S$, 
$\mu(0)= x$, $\mu(v) = x + v + O(|v|)$ as $v \to 0$, $v \in V\cap K$. See, e.g., \cite[Definition 8]{Sussmann99}.}
Let $\hat{\cal S}:={\cal S}_{1,2}={\cal S}_{2,1}$. 
The Boltyanskii approximating cone to the set $\hat{\cal S}$ is the set itself. Then, for each
$j \in \{1,2,\ldots,J-1\}$, $K^{\perp}_j$ is given by \footnote{Given $z,z' \in \reals^m\times\reals^n$, $\langle z,z' \rangle$ follows the inner product definition in Section~\ref{sec:Notation} and is the inner product between the vectors obtained from stacking the columns of $z$ and $z'$.}
$$K^{\perp} = \defset{w \in \ODubins\times\ODubins}{\langle w,v \rangle \leq 0\ \forall v \in \hat{\cal S}}\ .$$ 
Note that $w \in K^{\perp}$ if and only if
$\langle w,v \rangle \leq 0$ for all $v:= ([v^a_1,\ v^a_2,\ v^a_3]^{\top},[v^b_1,\ v^b_2,\ v^b_3]^{\top}) \in \ODubins\times\ODubins$ such that $v^a_2=v^b_2= 0$.
Hence, condition \eqref{eqn:JumpConditionCone} implies that, for each $(t_j,j) \in E$, $j \in \{1,2,\ldots,J-1\}$,
\begin{eqnarray}\non
\langle -\adj(t_j,j)+\adj(t_j,j+1),v'\rangle=0
\end{eqnarray}
for all $v':= [v'_1,\ v'_2,\ v'_3]^{\top} \in \ODubins$ such that  $v'_2 = 0$.
Then, for each such $(t_j,j)$,
$\adjx(t_j,j)=\adjx(t_j,j+1)$, $\adjang(t_j,j) = \adjang(t_j,j+1)$, 
and only $\adjy$ can jump. This shows item b).

Claim c) follows directly from the Hamiltonian maximization condition
guaranteed by \cite[Theorem 1]{Sussmann99}.
\cite[Theorem 1]{Sussmann99} 
establishes that $u$ satisfies
\begin{eqnarray}\label{eqn:MaximizationofH}
\begin{array}{ll}
H_{q(t,j)}&(\zcont(t,j),\adj(t,j),\adj_{\circ},u(t,j)) \\ &= 
\max_{u' \in U} H_{q(t,j)}(\zcont(t,j),\adj(t,j),\adj_{\circ},u')
\end{array}
\end{eqnarray}
for almost every $t \in [t_j,t_{j+1}]$, $(t,j) \in E$.
Then, since $H_q(\zcont,\adj,\adj_{\circ},u) = \adjx v_{q} \sin \ang+\adjy v_{q} \cos \ang + \adjang u - \adj_{\circ}$, 
for each $(t,j)\in E$ for which $\adjang(t,j) \not = 0$, $u(t,j) = \sign(\adjang(t,j))\overline{u}_q \in \{-\overline{u}_q,\overline{u}_q\}$.
When $\adjang(t,j)=0$, $H_q(\zcont,\adj,\adj_{\circ},u)$ does not depend on $u(t,j)$ and hence, the optimum value of $u(t,j)$ is ambiguous. If the ambiguity exists over a time interval, we have the singular arc case. In this case, differentiating $\adjang(t,j)$ twice and setting it to zero yields $\ddot{\adjang}(t,j)=(\adjx v_q \sin \ang +\adjy v_q \cos \ang)u=0$, which leads to $u(t,j) = 0$ if $\adjang(t,j) = 0$. This implies item c). Note that in optimal control, this is referred to as bang-singular control.

To establish item d), note that when $\adjang(t,j)=0$ on $I_j \times \{j\}$, 
since $t \mapsto \adjang(t,j)$ is absolutely continuous, 
we have that $\frac{d}{dt}\adjang(t,j)=0$ for each $(t,j) \in I_j^\circ \times \{j\}$.
Then, item d) follows from \eqref{eqn:AdjointDynamics} and absolute continuity 
of $t \mapsto \ang(t,j)$.

Finally, to show item e) we use the Hamiltonian value condition guaranteed by \cite[Theorem 1]{Sussmann99}.
In fact, since the jump condition in $\HS$ is time independent, that is, $J_1=J_2=\reals$, 
the Hamiltonian value condition in \cite[Theorem 1]{Sussmann99} establishes that 
there exists $c \in \reals$ such that \eqref{eqn:HamiltonianCondition}
holds
for almost every $t \in [t_j,t_{j+1}]$, $(t,j) \in \dom E$.
\end{proof}

Item a) determines the dynamics of the adjoint function $\adj$ while items b), c), and d) establish
conditions that the components of $\adj$, $\zcont$, and $u$ satisfy along the path.
In particular, item c) indicates that the paths either describe a straight line or an arc with minimum turning radius, i.e., of radius $r_p$, 
where $p$ is equal to the corresponding entry in $\{1,2\}$.

The properties highlighted in Lemma~\ref{propo:PropertiesTwoRegions}
can be used to establish conditions for the functions $\x,\y,\ang,$ and $u$ 
in Problem~\ref{problem:NRegions}, since at every crossing of the boundary between
two arbitrary regions, 
a change of coordinates can be performed so that 
the geometry in Lemma~\ref{propo:PropertiesTwoRegions} holds
for the chosen boundary crossing.
 In fact, at every crossing of the boundary between
two arbitrary regions occurring at a position $(x^*,y^*)$,
a change to a coordinate system with vertical axis perpendicular to the tangent
to the boundary of the two regions and vertical coordinate equal to zero when $y=y^*$
can be defined via a rotation plus translation of the original coordinate
system $(x,y,\theta)$. 
More precisely, a coordinate transformation performing this operation is given by
\begin{align} \label{eqn:coordTrans}
\begin{bmatrix} x' \\ y' \\ \theta'  \end{bmatrix}=\begin{bmatrix} \cos \varphi & \sin \varphi & 0 \\ -\sin \varphi & \cos \varphi & 0  \\ 0 & 0 & 1 \end{bmatrix} \begin{bmatrix} x \\ y \\ \theta \end{bmatrix}-\begin{bmatrix} x^* \cos \varphi+y^* \sin \varphi  \\ -x^* \sin \varphi+y^* \cos \varphi \\ -\varphi \end{bmatrix}
\end{align}
where 
$\varphi$ is the angle of the tangent line to the boundary
of the regions where the crossing occurs.
In this setting, 
as in Lemma~\ref{propo:PropertiesTwoRegions},
the optimal paths from given initial and terminal constraints can 
be characterized using the principle of optimality for hybrid systems
in \cite{Sussmann99}.

\begin{theorem}{(optimality conditions of paths to Problem~\ref{problem:NRegions})}
\label{thm:optimalityConditionsPv}
Let the curve $\X$ describe a minimum-time 
path that solves Problem~\ref{problem:NRegions} and let $\x, \y$ and  $\ang$ 
be its associated functions with input $u$.
Define the function $q$ 
following the 
construction below  
 \eqref{eqn:SwitchingSet}.
 Suppose Assumption~\ref{assump:OptimalTrajectories} holds.
 Then, the following properties hold:
\begin{itemize}
\item[a)] The curve $\X$ is a smooth concatenation of finitely many pieces from the set $\DubinsPaths$.
\item[b)] The angular  
velocity input
$u(t,j)$ 
 is piecewise constant with finitely many pieces
taking value in $\{-\overline{u}_{q(t,j)},0,\overline{u}_{q(t,j)}\}
=\{-\frac{v_{q(t,j)}}{r_{q(t,j)}},0,\frac{v_{q(t,j)}}{r_{q(t,j)}}\}$.
\item[c)] For each $j \in \{0,1,\ldots,J-1\}$, each piece $\X_j$ of the curve $\X$
is Dubins optimal between the first and last point of such \yong{a} piece, i.e., it is given as in \eqref{eqn:DubinsPaths}.
\item[d)] For each $(t_j,j) \in E$, $j \in \{1,2,\ldots,J-1\}$, if the last path piece of $\X_{j-1}$ and the first path piece of $\X_j$ are of type $\L$, and moreover, if $v_{q(t_j,j-1)} \not= v_{q(t_j,j)}$,
then $\ang(t_j,j)$ is zero or any multiple of $\pi$.\end{itemize}
\end{theorem}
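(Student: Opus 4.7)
The plan is to invoke Lemma~\ref{propo:PropertiesTwoRegions} at every crossing after applying the coordinate transformation~\eqref{eqn:coordTrans}, which locally straightens the region boundary into $\{y=0\}$ and thus brings each crossing into the lemma's geometric setting.

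Parts (a) and (b) are then essentially bookkeeping. On each sub-curve $\X_j$, item~c) of Lemma~\ref{propo:PropertiesTwoRegions} forces the input $u$ to take values in $\{-\overline{u}_{q_j},0,\overline{u}_{q_j}\}$ (bang-bang where $\adjang\neq 0$, singular where $\adjang\equiv 0$), so every piece is either an $\L$-segment or a minimum-radius arc $\C^\pm$ traveled inside $\P_{q_j}$. Part (c) follows from a cut-and-paste argument: with the endpoints and tangent directions of $\X_j$ fixed, its restriction to $\P_{q_j}$ solves a classical Dubins problem with constant velocity $v_{q_j}$ and radius $r_{q_j}$, so if $\X_j$ were not Dubins optimal between those endpoints, replacing it with a shorter Dubins path having the same endpoints would produce a strictly faster overall trajectory for Problem~\ref{problem:NRegions}, contradicting the minimum-time property of $\X$. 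In particular each $\X_j$ is one of the words listed in~\eqref{eqn:DubinsPaths} (or a sub-path thereof), so the total number of pieces in (a) is bounded by $3J$ and hence finite.

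The hard part will be (d). Assume at the crossing $(t_j,j)$ that the incoming and outgoing pieces are both of type $\L$. On each such piece $u\equiv 0$, and the singular-arc case of item~c) gives $\adjang\equiv 0$, so item~d) yields $\adjx=\adjy\tan\ang$ on both sides of the crossing (the subcase $\cos\ang(t_j,j)=0$ is ruled out, for it would force the entire $\L$-piece to lie on the straightened boundary $\{y=0\}$ and hence on $\partial \P_{q_j}$, contradicting condition~\ref{assump:OptimalTrajectories1} of Assumption~\ref{assump:OptimalTrajectories}). Substituting $\adjx=\adjy\tan\ang$ into the Hamiltonian of item~a) collapses it on the $\L$-pieces to
\begin{equation*}
H_q \;=\; \adjy\, v_q/\cos\ang - \adj_\circ.
\end{equation*}
Equating this value across the jump via item~e), and combining with the continuity of $\adjx$ (item~b)) and of $\ang$ with the identity $\adjy^-\tan\ang = \adjx = \adjy^+\tan\ang$, gives, whenever $\tan\ang(t_j,j)\ne 0$, first $\adjy^- = \adjy^+$ and hence $v_{q(t_j,j-1)} = v_{q(t_j,j)}$. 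This contradicts the hypothesis of~(d), so $\tan\ang(t_j,j)=0$, i.e., $\ang(t_j,j)$ is an integer multiple of $\pi$.
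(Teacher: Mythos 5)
Your proposal is correct and follows essentially the same route as the paper: apply Lemma~\ref{propo:PropertiesTwoRegions} crossing-by-crossing after the transformation \eqref{eqn:coordTrans}, get (a)--(c) from the bang--singular structure plus Bellman's principle and Dubins' result, and get (d) by combining items b), d), e) of the lemma at an $\L$-to-$\L$ crossing. Your algebra for (d) eliminates $\adjx$ in favor of $\adjy$ (writing $H_q=\adjy v_q/\cos\ang-\adj_\circ$) where the paper eliminates $\adjy$ in favor of $\overline{\adjx}$ to reach \eqref{eqn:HamiltonianContinuitynoInput2}, but the two computations are equivalent (and share the same implicit nondegeneracy of the adjoint), and your explicit dismissal of the $\cos\ang=0$ subcase via Assumption~\ref{assump:OptimalTrajectories} is a small bonus.
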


\ShowProofs{
\begin{proof}
We apply Lemma~\ref{propo:PropertiesTwoRegions} to the subpieces of the curve $\X$,
 one at a time,
after performing the coordinate transformation in \eqref{eqn:coordTrans} if needed.
Consider the $j$-th piece $\X_j$ of the curve $\X$. 
By item c) in Lemma~\ref{propo:PropertiesTwoRegions}, for each $(t,j) \in [t_j,t_{j+1}]\times\{j\}$,
$u(t,j)$  
takes value $-\overline{u}_{q},0$, or $\overline{u}_{q}$. 
When $u(t,j)=\pm\overline{u}_q$, 
the path at such $(t,j)$'s is of type $\C$ (either $\C^+$ or $\C^-$),
while when $u(t,j) = 0$,
the path at such $(t,j)$'s is of type $\L$.
Then, the curve $\X_j$ is a concatenation of paths of type $\{\C^+,\C^-,\L\}$.
Bellman's principle of optimality implies that for $\X$ to be optimal, each piece $\X_j$ 
also has to be optimal. Then, by the original result by Dubins in \cite{Dubins57},
the concatenation of paths that define $\X_j$ is finite.
Proceeding in this way for each piece of $\X$, items a)-c) hold true.
To show item d), we apply Lemma~\ref{propo:PropertiesTwoRegions}.c), d), and e) 
to each pair of pieces $\X_{j-1},\X_j$ for each $j \in \{1,2,\ldots,J-1\}$. 
Using the coordinate transformation \eqref{eqn:coordTrans} at
the boundary of the $(j-1)$-th region and the $j$-th region at which the pair of
pieces $\X_{j-1},\X_j$ connect, we have that $\X_{j-1},\X_j$ connect at a point 
in the set $\defset{(\x,\y)\in\reals^2}{\y=0}$.
Then, the pair of pieces $\X_{j-1},\X_j$ can be treated as the case of two regions
considered in Lemma~\ref{propo:PropertiesTwoRegions}.
Item b) 
in Lemma~\ref{propo:PropertiesTwoRegions}
indicates that 
the adjoint function component $\adjang$ remains constant at jumps, i.e., $\adjang(t_j,j-1) = \adjang(t_j,j)$, which by the arguments above is equal to zero
for $\L$-type paths. Since,
by Lemma~\ref{propo:PropertiesTwoRegions}.c)
the angular velocity input does not change when $\adjang$ is constant, then,
at $(t_j,j-1)$, we have that $u(t_j,j-1) = u(t_j,j)$.
It follows that $\ang(t_j,j-1) = \ang(t_j,j)$.
By Lemma~\ref{propo:PropertiesTwoRegions}.b), $\adjx(t_j,j) = \adjx(t_j,j+1)=\overline{\adjx}$.
Then, the Hamiltonian value condition 
guaranteed by Lemma~\ref{propo:PropertiesTwoRegions}.e)
becomes
\begin{eqnarray}\label{eqn:HamiltonianContinuitynoInput}
\begin{array}{ll}
&\overline{\adjx} v_{q} \sin \ang(t_j,j)+\adjy(t_j,j) v_{q} \cos \ang(t_j,j)
\\&\hspace{-0.2cm} =\overline{\adjx} v_{q^+} \sin \ang(t_j,j+1)+\adjy(t_j,j+1) v_{q^+} \cos \ang(t_j,j+1)
\end{array}
\end{eqnarray}
where we have used the shorthand notation 
$q = q(t_j,j)$ and $q^+ = q(t_j,j+1)$.
Item d) in Lemma~\ref{propo:PropertiesTwoRegions} implies that
$\adjy(t_j,j)\tan \ang(t_j,j)=\overline{\adjx}$
and 
$\adjy(t_j,j+1)\tan \ang(t_j,j)=\overline{\adjx}$.
Substituting these expressions into
\eqref{eqn:HamiltonianContinuitynoInput},
it follows that
\begin{eqnarray}\label{eqn:HamiltonianContinuitynoInput2}
v_{q(t_j,j-1)} \sin\ang(t_j,j) = v_{q(t_j,j)} \sin \ang(t_j,j)\ .
\end{eqnarray}
Since $v_{q(t_j,j-1)} \not= v_{q(t_j,j)}$,
\eqref{eqn:HamiltonianContinuitynoInput2} holds if and only if
$\ang(t_j,j) = 0$ or any multiple of $\pi$.
Repeating this procedure for each piece concludes the proof.
\end{proof}
}

\subsection{Refraction law at boundary}

The necessary conditions of the Hamiltonian $H_q$ given in Lemma~\ref{propo:PropertiesTwoRegions}
relate the values of the state vector $\zcont$, the angular velocity input $u$ and the adjoint vector $\adj$ before and after crossing a boundary between regions. Using 
the properties of the adjoint vector and its relationship to the state vector $\zcont$
in items b)-d) in Lemma~\ref{propo:PropertiesTwoRegions}, it can be shown that an algebraic
condition involving $\adj$, $\zcont$, and certain angles holds at the crossings points for optimal paths of type $\L_p\C_{p-p'}\L_{p'}$
in Problem~\ref{problem:NRegions}. \footnote{To apply Lemma~\ref{propo:PropertiesTwoRegions} 
to the paths
of Problem~\ref{problem:NRegions}, we proceed as in the proof of Theorem~\ref{thm:optimalityConditionsPv}.}
More precisely, the condition on the Hamiltonian in item e)
of Lemma~\ref{propo:PropertiesTwoRegions} 
implies that for each $(t_j,j) \in E$
\begin{align}\label{eqn:HamiltonianJump2}
 &\adjx v_{p} \sin \ang+\adjy v_{p} \cos \ang +  \adjang u  \\
\nonumber &= \adjx^+ v_{p'} \sin(\ang^+)+\adjy^+ v_{p'} \cos(\ang^+) +  \adjang^+ u^+ = c+\adj_{\circ} \ ,
\end{align}
where 
$p = q(t_j,j)$, $p' = q(t_j,j+1)$ and $c ,\adj_{\circ}\in \reals$ are constants.
 For notational convenience, we dropped the $(t_j,j)$'s and denoted the valuation of the functions
at $(t_j,j+1)$ with $\null^+$. 
Moreover, for the path pieces of type $\L$,  items b), c) and e) in Lemma~\ref{propo:PropertiesTwoRegions} imply
\begin{align}\label{eqn:HamiltonianJump3}
\begin{array}{lll}
&\adjx v_{p} \sin \ang_p+\adjy v_{p} \cos \ang_p  
\\ &=
\adjx^+ v_{p'} \sin\ang_{p'}+\adjy^+ v_{p'} \cos \ang_{p'}   = c +\adj_{\circ}\ ,
\end{array}
\end{align}
where $\ang_{p}$ and $\ang_{p'}$ denote
the initial and final angle, respectively, of a path piece intersecting the boundary between $\P_q$ and $\P_{q^+}$, as shown in Figure~\ref{fig:Refraction}.
The algebraic conditions in \eqref{eqn:HamiltonianJump2} and \eqref{eqn:HamiltonianJump3}
can be reduced to a refraction law as established in the following result.

\begin{theorem}{(refraction law)}
\label{thm:refractionPv}
Let the curve $\X$ describe a minimum-length curve that solves Problem~\ref{problem:NRegions} and let $\x, \y$ and $\ang,$
be its associated functions with input $u$. Consider partition pairs $\X_{j-1},\X_j$ for all $j\in\{1,2,\hdots,J-1\}$ such that the path pieces across the boundary are of type $\L_p\C_{p-p'}\L_{p'}$. For each of such pair of pieces, 
suppose the end points (opposite to the intersection with the boundary) of those path pieces are in $\P_p^{\circ}$ and $\P_{p'}^{\circ}$ for some $p, p' \in Q$, respectively,  and suppose Assumption~\ref{assump:OptimalTrajectories} holds.
Let $\Delta \ang_p, \Delta \ang_{p'} \in \reals$
be given by $\Delta \ang_p := \ang^* - \ang_p$,
$\Delta \ang_{p'} := \ang_{p'} - \ang^*$, where
$\ang^*$ 
is the orientation of the vehicle at the boundary, 
i.e.,
it is the angle between the path and the boundary between $\P_p$ and $\P_{p'}$ at their intersection 
with respect to the normal to the boundary.
If the path piece intersecting the boundary between $\P_p$ and $\P_{p'}$ is one of the type $\C_{p-p'}$ paths shown in Figure~\ref{fig:Refraction}, 
then
$v_p, v_{p'},r_p, r_{p'}, \ang_p, \ang_{p'}, \Delta \ang_{p}$ and
$\Delta \ang_{p'}$ satisfy
\begin{eqnarray}\label{eqn:RefractionHSv}
\frac{v_p}{v_{p'}}&=&\frac{\sin \ang_{p} }{\sin \ang_{p'} } \ ,\\
\label{eqn:RefractionHSv2} \frac{r_p}{r_{p'}}&=&\frac{v_p}{v_{p'}}\left(\frac{1-\cos \Delta \ang_{p'}}{1-\cos \Delta \ang_{p}}\right)=\frac{\sin \ang_{p}(1-\cos \Delta \ang_{p'})}{\sin \ang_{p'}(1-\cos \Delta \ang_{p})}\ .
\end{eqnarray}

Moreover, if the path piece intersecting the boundary between $\P_p$ and $\P_{p'}$ is of type $\L$ and $v_p \not = v_{p'}$, then
$\ang_p$ and $\ang_{p'}$ are equal to zero or any multiple of $\pi$.
\end{theorem}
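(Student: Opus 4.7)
The plan is to apply Lemma~\ref{propo:PropertiesTwoRegions} to each pair of path pieces $\X_{j-1}, \X_j$ after the coordinate transformation \eqref{eqn:coordTrans}, which brings the local boundary onto $\{y=0\}$ and reduces us to the two-region geometry of the lemma. The last statement of the theorem---about an $\L$-type piece crossing the boundary with $v_p \ne v_{p'}$---is immediate from Theorem~\ref{thm:optimalityConditionsPv}.d), so the substantive work is to establish \eqref{eqn:RefractionHSv} and \eqref{eqn:RefractionHSv2}. These split naturally into two computations: the straight pieces yield the Snell-type relation via the Hamiltonian value condition, while the turning arc yields the curvature ratio via integration of the $\adjang$ dynamics.

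For \eqref{eqn:RefractionHSv}, on each $\L$ piece the input satisfies $u \equiv 0$, so by Lemma~\ref{propo:PropertiesTwoRegions}.c) we are in the singular arc case with $\adjang \equiv 0$, and item d) then forces $\adjy \tan\ang = \adjx$. Since $\adjx \equiv \overline{\adjx}$ by item b), substituting $\adjy = \overline{\adjx}\cos\ang/\sin\ang$ into $H$ collapses the Hamiltonian on the $\L_q$ piece (for $q \in \{p,p'\}$) to $H = \overline{\adjx}\,v_q/\sin\ang_q - \adj_{\circ}$, using $\sin^2\ang_q + \cos^2\ang_q = 1$. By item e), this value equals the same constant $c$ on both $\L_p$ and $\L_{p'}$, so $v_p/\sin\ang_p = v_{p'}/\sin\ang_{p'}$, which is \eqref{eqn:RefractionHSv}.

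For \eqref{eqn:RefractionHSv2}, I would compute $\adjang$ at the boundary crossing by integrating the adjoint dynamics $\dot\adjang = -\adjx v_q\cos\ang + \adjy v_q\sin\ang$ from each side of the arc. Continuity of $\adjang$ at the $\L$-$\C$ junctions (no region change occurs there) together with $\adjang \equiv 0$ on the $\L$ pieces supplies the initial data $\adjang = 0$ at both endpoints of the arc. On the $\P_p$-half, $\adjx = \overline{\adjx}$ and $\adjy$ is constant and equal to the value $\overline{\adjx}\cos\ang_p/\sin\ang_p$ inherited from $\L_p$; substitution reduces the integrand to a multiple of $\sin(\ang - \ang_p)$, and after changing variable via $d\ang = u\,dt = \pm(v_p/r_p)\,dt$ the integral evaluates to $\adjang(\ang^*) = \pm(\overline{\adjx}\,r_p/\sin\ang_p)(1-\cos\Delta\ang_p)$. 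The same calculation on the $\P_{p'}$-half, now starting from $\adjang = 0$ at the far end of the arc, yields $\adjang(\ang^*) = \pm(\overline{\adjx}\,r_{p'}/\sin\ang_{p'})(1-\cos\Delta\ang_{p'})$. Since $\adjang$ does not jump across the boundary by Lemma~\ref{propo:PropertiesTwoRegions}.b), equating these two expressions and combining with \eqref{eqn:RefractionHSv} delivers \eqref{eqn:RefractionHSv2}.

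The main technical obstacle I expect is bookkeeping the signs $\pm$ arising from the direction of turning on the two halves of the arc. These cancel precisely because $\adjang$ is continuous across the boundary and determines the sign of $u$ by Lemma~\ref{propo:PropertiesTwoRegions}.c), so the arc $\C_{p-p'}$ maintains its orientation (entirely $\C^+$ or entirely $\C^-$) at the crossing. Degenerate subcases---$\overline{\adjx} = 0$ or $\sin\ang_q = 0$---need separate handling: the second is covered by the last assertion of the theorem, while the first is ruled out by combining the nontriviality of $(\adj,\adj_{\circ})$ with $\adjang \equiv 0$ on the $\L$ pieces (which would otherwise force $\adj \equiv 0$, contradicting nontriviality unless $\adj_\circ>0$, in which case both refraction laws hold vacuously).
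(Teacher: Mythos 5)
Your proposal is correct, and it reaches \eqref{eqn:RefractionHSv} by essentially the paper's own route: singular arc $\Rightarrow \adjang\equiv 0$ on the $\L$ pieces, item d) giving $\adjy\tan\ang=\adjx=\overline{\adjx}$, and the Hamiltonian value condition collapsing to $\overline{\adjx}v_q/\sin\ang_q=c+\adj_\circ$ on each straight piece. For \eqref{eqn:RefractionHSv2}, however, you take a genuinely different path. The paper never integrates the adjoint ODE along the arc: it evaluates the (constant) Hamiltonian \emph{at the boundary crossing itself}, where $\adjang u=|\adjang|\,v_q/r_q$, subtracts the $\L$-piece Hamiltonian identity from it, and divides the two resulting expressions for $|\adjang|v_p/r_p$ and $|\adjang|v_{p'}/r_{p'}$. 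You instead integrate $\dot\adjang=-\overline{\adjx}v_q\cos\ang+\overline{\adjy}_q v_q\sin\ang=\frac{\overline{\adjx}v_q}{\sin\ang_q}\sin(\ang-\ang_q)$ from each $\L$--$\C$ junction (where $\adjang=0$) to the boundary, with the change of variable $d\ang=u\,dt$, and equate the two values by continuity of $\adjang$ across the jump. The two computations are equivalent --- constancy of $H_q$ along a flow is exactly the first integral of the adjoint dynamics you are integrating by hand --- but yours makes the geometric origin of the factors $(1-\cos\Delta\ang_q)$ and the appearance of $r_q$ (through $dt=r_q\,d\ang/(\pm v_q)$) transparent, at the cost of the explicit sign bookkeeping, while the paper's use of item e) packages the integration into one line. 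Your treatment of the degenerate case $\overline{\adjx}=0$ is somewhat loose (if $\adj\equiv 0$ the necessary conditions become uninformative rather than the refraction law holding ``vacuously''), but the paper's own proof silently divides by the same quantities, so this is not a gap relative to what the paper establishes.
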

\begin{figure}[ht] 
     \psfrag{Cp}[][][0.9]{$\C^+$}
     \psfrag{Cm}[][][0.9]{$\C^-$}
     \psfrag{L}[][][0.9]{$\L$}
     \psfrag{M}[][][0.9]{$\hspace{0.45in}\boundary$}
     \psfrag{1}[][][0.8]{${\cal P}_p$}
     \psfrag{2}[][][0.8]{${\cal P}_{p'}$}
     \psfrag{R1}[][][0.8]{$r_p$}
     \psfrag{R2}[][][0.8]{$r_{p'}$}
     \psfrag{ang1}[][][0.8]{$\ang_p$}
     \psfrag{ang2}[][][0.8]{$\ang_{p'}$}
     \psfrag{t}[][][0.8]{$\ang^*$}
     \psfrag{Dang1}[][][0.8]{$\Delta\ang_p$}
     \psfrag{Dang2}[][][0.8]{$\ \ \Delta\ang_{p'}$}
  \begin{center}
    {\includegraphics[width=.2\textwidth]{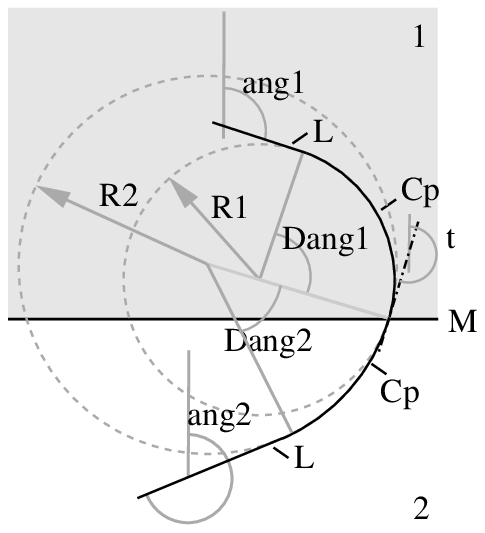}} 
\caption{Refraction law for $\L_p\C_{p-p'}\L_{p'}$-type paths at the boundary
for a vehicle traveling from $\P_p$ to $\P_{p'}$.
 The $\L$ path pieces define the angles $\ang_p, \ang_{p'}$ and their variations $\Delta\ang_p, \Delta\ang_{p'}$, which satisfy equations \eqref{eqn:RefractionHSv} and \eqref{eqn:RefractionHSv2},
which is a generalization of Snell's law of refraction.}
\label{fig:Refraction}
  \end{center}
\end{figure}

\ShowProofs{
\begin{proof}
We first carry out the coordinate transformation \eqref{eqn:coordTrans} at
the boundary of the $(j-1)$-th region and the $j$-th region at which the pair of
pieces $\X_{j-1},\X_j$ connect, in order that $\X_{j-1},\X_j$ connect at a point 
in the set $\defset{(\x,\y)\in\reals^2}{\y=0}$.
From items b) and c) of Lemma~\ref{propo:PropertiesTwoRegions}, we obtain $\adjx=\adjx^+=\overline{\adjx}$, $\adjy=\overline{\adjy}_p$, $\adjy^+=\overline{\adjy}_{p'}$, $\adjang=\adjang^+$, $u=\sign(\adjang)\overline{u}_p=\sign(\adjang)\frac{v_p}{r_p}$ and $u^+=\sign(\adjang^+)\overline{u}_{p'}=\sign(\adjang)\frac{v_{p'}}{r_{p'}}$. Moreover, at the boundary, we have $\ang=\ang^+=\ang^*$. Thus, \eqref{eqn:HamiltonianJump2} and \eqref{eqn:HamiltonianJump3} become 
\begin{align}\label{eqn:HamiltonianContinuitynoInputAngStar}
\nonumber &\overline{\adjx} v_{p} \sin \ang^*+\overline{\adjy}_p v_{p} \cos \ang^* + |\adjang| \frac{v_p}{r_p} \\ &= 
\overline{\adjx} v_{p'} \sin \ang^*+\overline{\adjy}_{p'} v_{p'} \cos \ang^* + |\adjang| \frac{v_{p'}}{r_{p'}}=c+\adj_{\circ} \ ,\\ \label{eqn:HamiltonianContinuitynoInputAngStar2}
\nonumber &\overline{\adjx} v_{p} \sin \ang_p+\overline{\adjy}_p v_{p} \cos \ang_p  \\&= \overline{\adjx} v_{p'} \sin \ang_{p'}+\overline{\adjy}_{p'} v_{p'} \cos \ang_{p'}=c+\adj_{\circ} \ ,
\end{align}
respectively.
Furthermore, for the path pieces of type $\L$,
$\overline{\adjy}_p\tan \ang_p = \overline{\adjx}$ 
and
$\overline{\adjy}_{p'} \tan \ang_{p'} = \overline{\adjx}$ by 
item d) of Lemma~\ref{propo:PropertiesTwoRegions}. Substituting these into
\eqref{eqn:HamiltonianContinuitynoInputAngStar2},
we obtain \eqref{eqn:RefractionHSv}
since
\begin{align*}
&\overline{\adjx} v_{p} \sin \ang_p+\overline{\adjx} v_{p} \cot \ang_p \cos \ang_p  \\&= \overline{\adjx} v_{p'} \sin \ang_{p'}+\overline{\adjx} v_{p'}  \cot \ang_{p'} \cos \ang_{p'} \; \Rightarrow \; \frac{v_p}{v_{p'}}=\frac{\sin \ang_p}{\sin \ang_{p'}}\ .
\end{align*}
To finish the proof, we subtract \eqref{eqn:HamiltonianContinuitynoInputAngStar2} from \eqref{eqn:HamiltonianContinuitynoInputAngStar} and substitute $\overline{\adjy}_p\tan \ang_p = \overline{\adjx}$ 
and
$\overline{\adjy}_{p'} \tan \ang_{p'} = \overline{\adjx}$ in it. We get
\begin{align*}
&|\adjang| \frac{v_p}{r_p} \\
&=-\overline{\adjx} v_{p} (\sin \ang^*-\sin \ang_p)-\overline{\adjx} v_{p} \cot \ang_p ( \cos \ang^*- \cos \ang_p)\\
&=-\frac{\overline{\adjx} v_{p}}{\sin \ang_p}(\sin \ang_p (\sin \ang^*-\sin \ang_p)\\&\qquad + \cos \ang_p ( \cos \ang^*- \cos \ang_p))
\end{align*}
and
\begin{align*}
& |\adjang| \frac{v_{p'}}{r_{p'}}
\\&= -\overline{\adjx} v_{p'} (\sin \ang^*-\sin \ang_{p'})-\overline{\adjx} v_{p'} \cot \ang_{p'} ( \cos \ang^*- \cos \ang_{p'})\\
&=-\frac{\overline{\adjx} v_{p'}}{\sin \ang_{p'}} (\sin \ang_{p'} (\sin \ang^*-\sin \ang_{p'})\\ & \qquad+ \cos \ang_{p'} ( \cos \ang^*- \cos \ang_{p'}))\ .
\end{align*}
Dividing the latter equation above with the former, 
using 
\eqref{eqn:RefractionHSv},
 and simplifying, we arrive at \vspace{-0.5cm} \small
\begin{align*}
&\frac{r_{p}}{r_{p'}} \\& =\frac{\sin \ang_p}{\sin \ang_{p'}}\frac{\sin \ang_{p'} (\sin \ang^*-\sin \ang_{p'})+ \cos \ang_{p'} ( \cos \ang^*- \cos \ang_{p'})}{ \sin \ang_{p} (\sin \ang^*-\sin \ang_{p})+ \cos \ang_{p} ( \cos \ang^*- \cos \ang_{p})}\\&=\frac{\sin \ang_{p}(1-\cos \Delta \ang_{p'})}{\sin \ang_{p'}(1-\cos \Delta \ang_{p})}=\frac{v_p}{v_{p'}}\left(\frac{1-\cos \Delta \ang_{p'}}{1-\cos \Delta \ang_{p}}\right)\ ,
\end{align*}
\normalsize

which is  \eqref{eqn:RefractionHSv2}.
When the type of the path intersecting the boundary between $\P_p$ and $\P_{p'}$ is $\L$, 
by item d) of Theorem~\ref{thm:optimalityConditionsPv}, $\ang_p = \ang_{p'} = 0 \pmod{\pi}$.
\end{proof}

\begin{corollary}
Under the conditions of Theorem~\ref{thm:refractionPv},
if $\overline{u}_p=\overline{u}_{p'}$} (Problem~\ref{problem:NRegions}$\star$), then \eqref{eqn:RefractionHSv2} reduces to $\Delta \ang_p=\Delta \ang_{p'}$.
\end{corollary}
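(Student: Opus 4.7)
The plan is to exploit the definition $\overline{u}_q = v_q/r_q$ so that the hypothesis $\overline{u}_p = \overline{u}_{p'}$ translates directly into $r_p/r_{p'} = v_p/v_{p'}$. This ratio identity is precisely what is needed to collapse the second equality in \eqref{eqn:RefractionHSv2}.

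First, I would substitute $r_p/r_{p'} = v_p/v_{p'}$ into
\begin{equation*}
\frac{r_p}{r_{p'}} \;=\; \frac{v_p}{v_{p'}}\left(\frac{1-\cos \Delta \ang_{p'}}{1-\cos \Delta \ang_{p}}\right).
\end{equation*}
Cancelling the common factor $v_p/v_{p'}$ (positive, since $v_q>0$ for every $q$ by Problem~\ref{problem:NRegions}), I would obtain
\begin{equation*}
1 - \cos \Delta\ang_{p} \;=\; 1 - \cos \Delta\ang_{p'},
\end{equation*}
which is equivalent to $\cos\Delta\ang_{p} = \cos\Delta\ang_{p'}$.

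The only remaining step is to resolve the sign ambiguity of $\cos^{-1}$, i.e., to rule out the case $\Delta\ang_{p} = -\Delta\ang_{p'}$ (modulo $2\pi$). This is the point that requires a geometric argument rather than pure algebra, and I would expect it to be the main obstacle. The resolution comes from the structure of an $\L_p\C_{p-p'}\L_{p'}$ path: the single $\C$-arc straddling the boundary turns in one consistent direction (either clockwise or counter-clockwise, determined by $\sign(\adjang)$, which is preserved across the jump by item b) of Lemma~\ref{propo:PropertiesTwoRegions}). Hence the heading change from $\ang_p$ to $\ang^*$ and from $\ang^*$ to $\ang_{p'}$ are of the same sign, so $\Delta\ang_p$ and $\Delta\ang_{p'}$ share a sign. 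Combined with $\cos\Delta\ang_p = \cos\Delta\ang_{p'}$, this forces $\Delta\ang_p = \Delta\ang_{p'}$, as claimed.

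In short, the proof reduces to a one-line algebraic cancellation enabled by $\overline{u}_p=\overline{u}_{p'}$, together with a brief appeal to the sign-consistency of the turning direction across the boundary (already encoded by items b)--c) of Lemma~\ref{propo:PropertiesTwoRegions} and visible in Figure~\ref{fig:Refraction}).
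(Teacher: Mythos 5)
Your proof is correct, and its first half (using $\overline{u}_p=\overline{u}_{p'}\Leftrightarrow r_p/r_{p'}=v_p/v_{p'}$ to cancel the velocity ratio in \eqref{eqn:RefractionHSv2} and obtain $\cos\Delta\ang_p=\cos\Delta\ang_{p'}$, hence $\Delta\ang_p=\pm\Delta\ang_{p'}$) is exactly the paper's first step. Where you diverge is in ruling out $\Delta\ang_p=-\Delta\ang_{p'}$: you argue geometrically that the single $\C_{p-p'}$ arc has $u$ of constant sign across the boundary (since $\adjang$ is continuous at the jump by item b) of Lemma~\ref{propo:PropertiesTwoRegions} and $u=\sign(\adjang)\overline{u}_q$ by item c)), so $\ang$ is monotone along the arc and $\Delta\ang_p$, $\Delta\ang_{p'}$ share a sign. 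The paper instead argues by contradiction: $\Delta\ang_p=-\Delta\ang_{p'}$ forces $\ang_p=\ang_{p'}$ by the definitions of the increments, then \eqref{eqn:RefractionHSv} gives $v_p=v_{p'}$, then the Problem~\ref{problem:NRegions}$\star$ condition gives $r_p=r_{p'}$, contradicting the standing convention that neighboring regions differ in velocity or turning radius. Your route buys independence from that irreducibility convention (it works even if adjacent regions happened to share all parameters) and makes the sign-consistency mechanism explicit; the paper's route avoids any geometric reasoning about the arc, using only the two refraction identities and the modeling assumption. Both arguments share the same implicit restriction that the heading increments lie in a range where equal cosines force $\pm$ equality, so neither is more complete on that point.
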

\ShowProofs{
\begin{proof}
The condition in Problem 1$\star$ is given by $\frac{v_p}{r_p} = \frac{v_{p'}}{r_{p'}}$. Using this in \eqref{eqn:RefractionHSv2}, we get $\Delta \theta_p = \pm \Delta \theta_{p'}$. To rule out the case  $\Delta \theta_p = - \Delta \theta_{p'}$, suppose, by contradiction, that this relation holds. Using the definition of $\Delta \theta_p$ and $\Delta \theta_{p'}$, we get $\theta_p = \theta_{p'}$. Then, applying \eqref{eqn:RefractionHSv}, the velocities $v_p$ and $v_{p'}$ in $\P_p$ and $\P_{p'}$, respectively, are equal. Replacing this in the condition in Problem 1$\star$ we get that $r_p = r_{p'}$. This leads to $\P_p$ and $\P_{p'}$ having the same properties, which is a contradiction.}
\end{proof}

Equations~\eqref{eqn:RefractionHSv} and ~\eqref{eqn:RefractionHSv2}
can be interpreted as a refraction law at the boundary of the two regions for the 
angles (and their variations) $\ang_p, \ang_{p'}$ (and $\Delta \ang_p,\Delta \ang_{p'}$).
In the case that the particle is allowed to instantaneously change its direction of travel, this refraction law
simplifies to the so-called {\em Snell's law of refraction}.
Snell's law of refraction in optics states a relationship between the 
angles of rays of light when passing through the boundary of two isotropic media with different refraction coefficients.
More precisely, given two media defining two regions in the state space with different refraction indices $v_1$ and $v_2$,
Snell's law of refraction states that
\begin{eqnarray}\label{eqn:SnellsLaw}
\frac{v_1}{v_2}=\frac{\sin\ang_1}{\sin\ang_2}\ ,
\end{eqnarray}
where $\ang_1$ is the angle of incidence and $\ang_2$ is the angle of refraction.
This law can be derived by solving a minimum-time problem between two points, one in 
each medium. Moreover, the dynamics of the rays of light can be associated to the
differential equations $\dot{x} = v_i$, where $v_i$ is the velocity in the $i$-th medium, $i = 1,2$. 
In fact,  Snell's law can be seen as a special case for refraction with the dynamics given by  \eqref{eqn:Dubins} with a radius of curvature of zero, i.e. with instantaneous jumps in $\ang$. In this case, \eqref{eqn:RefractionHSv2} 
holds trivially for arbitrary values of $\Delta \ang_1$ and $\Delta \ang_2$,
 while  the ratio of velocities 
 in \eqref{eqn:RefractionHSv} is essentially
\eqref{eqn:SnellsLaw}.
 Thus, Theorem~\ref{thm:refractionPv} generalizes Snell's law to the case 
when the ``dynamics" of the rays of light are given by \eqref{eqn:Dubins}. In the context of autonomous vehicles, the conditions given by \eqref{eqn:RefractionHSv} and \eqref{eqn:RefractionHSv2} are a generalization of the refraction law
for optimal steering of a point-mass vehicle, as in \cite{AlexanderRowe90,RoweAlexander00},
to the Dubins vehicle case.

\subsection{Optimal families of paths}

\begin{figure}[Htp!] 
  \begin{center}
     \psfrag{Cp}[][][0.9]{$\C^+$}
     \psfrag{Cm}[][][0.9]{$\C^-$}
     \psfrag{L}[][][0.9]{$\L$}
     \psfrag{M}[][][0.9]{}
     \psfrag{1}[][][0.8]{${\cal P}_p$}
     \psfrag{2}[][][0.8]{${\cal P}_{p'}$}
     \psfrag{R1}[][][0.8]{$r_p$}
     \psfrag{R2}[][][0.8]{$r_{p'}$}
      \subfigure[$\L$-type of path perpendicular to boundary. \label{fig:OptimalPathsAtBoundaryL}]
    {\includegraphics[width=.14\textwidth]{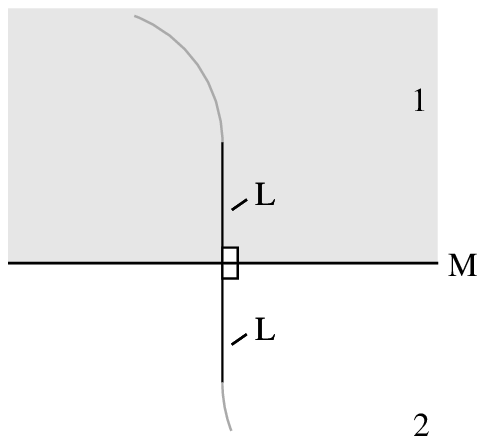}} \
    \subfigure[$\C^+$-type of path
 \label{fig:OptimalPathsAtBoundaryCplus1}]
    {\includegraphics[width=.14\textwidth]{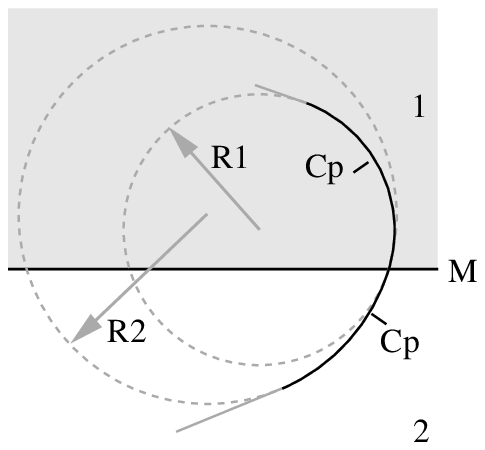}} \ 
     \subfigure[$\C^-$-type
 \label{fig:OptimalPathsAtBoundaryCplus2}]
    {\includegraphics[width=.14\textwidth]{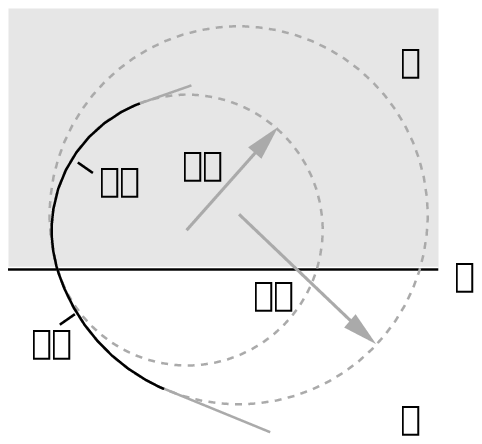}}  
    \subfigure [$\C^+/\L$-type
 \label{fig:OptimalPathsAtBoundaryCplus3}]
    {\includegraphics[width=.14\textwidth]{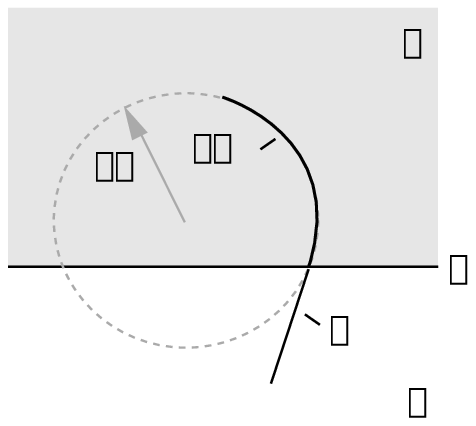}} \
     \subfigure [$\C^-/\L$-type
     \label{fig:OptimalPathsAtBoundaryCplus4}]
    {\includegraphics[width=.14\textwidth]{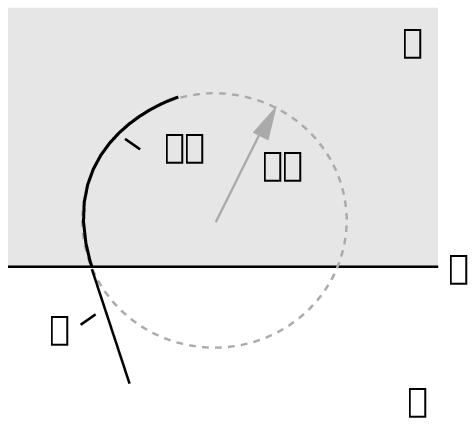}} \
    \subfigure[$\L/\C^+$-type
    \label{fig:OptimalPathsAtBoundaryL1}]
    {\includegraphics[width=.14\textwidth]{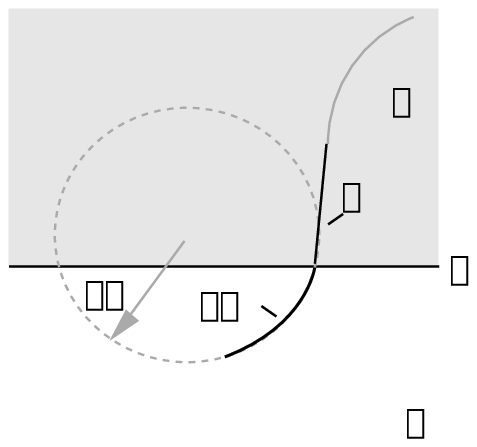}} \\
     \subfigure[$\L/\C^-$-type
     \label{fig:OptimalPathsAtBoundaryL2}]
    {\includegraphics[width=.14\textwidth]{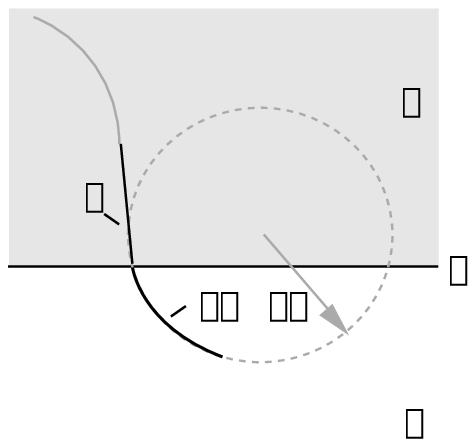}} \
    \subfigure[$\C^+/\C^-$-type
    \label{fig:OptimalPathsAtBoundaryL3}]
    {\includegraphics[width=.14\textwidth]{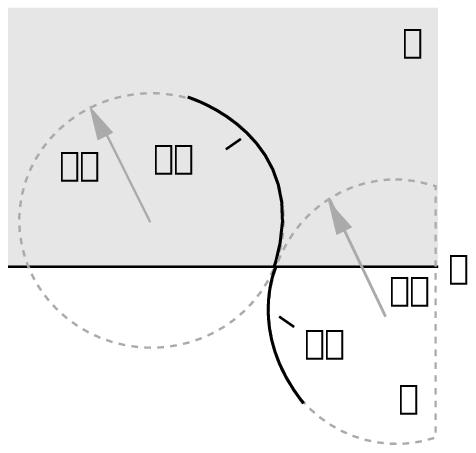}} \
        \subfigure[$\C^-/\C^+$-type
        \label{fig:OptimalPathsAtBoundaryL4}]
    {\includegraphics[width=.14\textwidth]{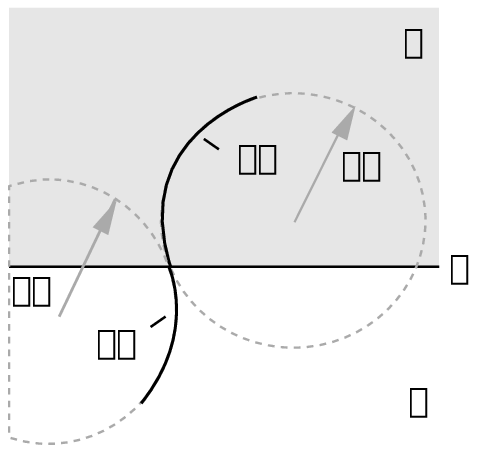}}
    
\caption{Optimal paths satisfying the necessary conditions in Theorem~\ref{thm:optimalityConditionsPv} at the boundary between two regions with minimum turning radius $r_{p}$ in region ${\cal P}_{p}$ and with minimum turning radius $r_{p'}$ in region ${\cal P}_{p'}$, $p,p' \in \Pset$. }
\label{fig:OptimalPathsAtBoundary}
  \end{center}
\end{figure}

\begin{figure}[tp!] 
  \begin{center}
     \psfrag{Cm}[][][0.9]{$\C^-$}
     \psfrag{Cp}[][][0.9]{$\C^+$}
     \psfrag{L}[][][0.9]{$\L$}
     \psfrag{M}[][][0.9]{}
     \psfrag{1}[][][0.8]{${\cal P}_p$}
     \psfrag{2}[][][0.8]{${\cal P}_{p'}$}
     \psfrag{R1}[][][0.8]{$r_p$}
     \psfrag{R2}[][][0.8]{$r_{p'}$}
\subfigure[$\L$-type of path nonorthogonal to boundary.]
    {\includegraphics[width=.14\textwidth]{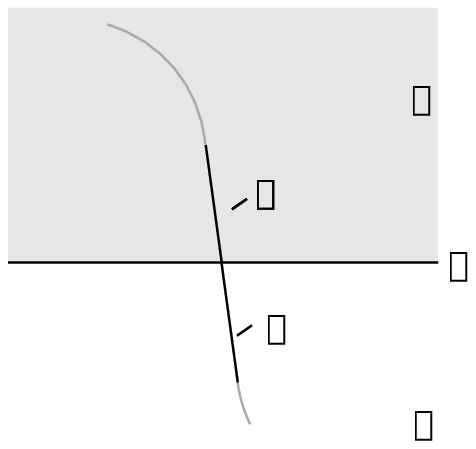}} \
    \subfigure[$\L_p\C^+_p\L_{p'}$-type.]
    {\includegraphics[width=.14\textwidth]{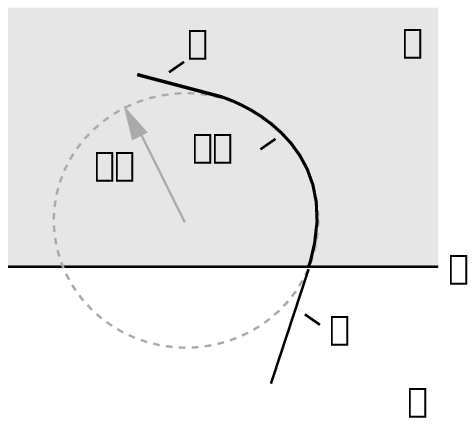}} \
    \subfigure[$\L_p\C^-_p\L_{p'}$-type.]
    {\includegraphics[width=.14\textwidth]{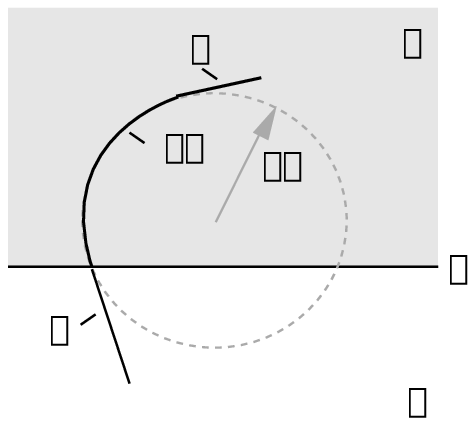}} \\
    \subfigure[$\L_p\C^+_{p'}\L_{p'}$-type.]
    {\includegraphics[width=.14\textwidth]{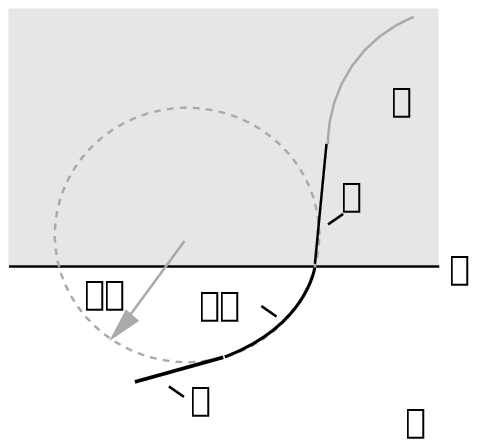}} \
    \subfigure[$\L_p\C^-_{p'}\L_{p'}$-type.]
    {\includegraphics[width=.14\textwidth]{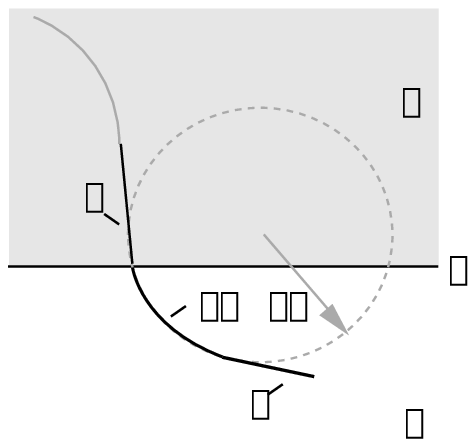}} \\
    \subfigure[$\L_p\C^+_{p}\C^-_{p'}\L_{p'}$-type.]
    {\includegraphics[width=.14\textwidth]{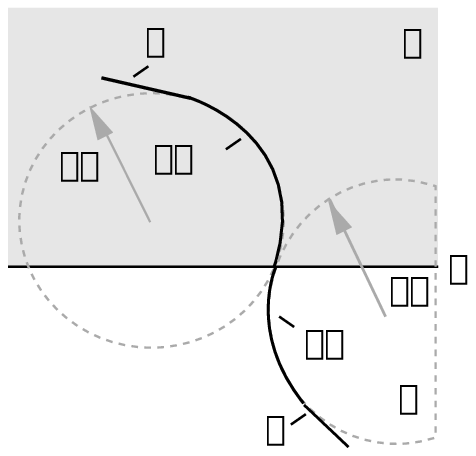}} \
    \subfigure[$\L_p\C^-_{p}\C^+_{p'}\L_{p'}$-type.]
    {\includegraphics[width=.14\textwidth]{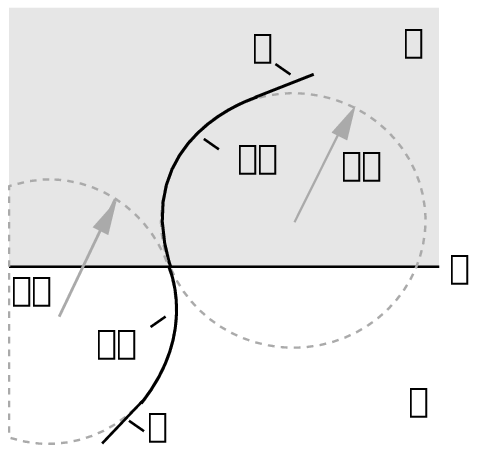}} \
\caption{Nonoptimal paths at the boundary determined by Corollary~\ref{coro:NonOptimalPaths}.}
\label{fig:NonOptimalPathsAtBoundary}
  \end{center}
\end{figure}

Using 
Theorem~\ref{thm:optimalityConditionsPv}, it is possible to determine optimal families of
paths for a class of paths to Problem~\ref{problem:NRegions}. The following statements follow directly from Dubins' 
result and Theorem~\ref{thm:optimalityConditionsPv}.
Below, 
the subscript $p$, $p'$ or $p-p'$ on each path piece denotes that the path piece is in region $p$, $p'$  or spans across both regions $p$ and $p'$, respectively. The numeric subscript on each path piece indicates the number of path piece. 

\begin{corollary}{(optimal paths w/one jump)}
\label{coro:OptimalPaths}
Let the curve $\X$ describe a minimum-length path that solves Problem~\ref{problem:NRegions} and let $\x, \y$ and  $\ang$ 
be its associated functions with input $u$.
Define the function $q$ 
following the 
construction below  
 \eqref{eqn:SwitchingSet}.
 Suppose Assumption~\ref{assump:OptimalTrajectories} holds.
If $\X$ 
has no more than one boundary cross between two adjacent regions $p$ and $p'$, then it is a smooth concatenation 
of $\C,\L$ paths pieces with at most three pieces in each region and is given by one of the following eight path types of paths:
\begin{align}\label{eqn:PVoptimalPaths}
\nonumber 
&\C_{1,p}\C_{2,p}\C_{3,p}\C_{4,p'}\L_{5,p'}\C_{6,p'},\ 
\C_{1,p}\C_{2,p}\C_{3,p}\C_{4,p'}\C_{5,p'}\C_{6,p'}, \\
\nonumber& \C_{1,p}\L_{2,p}\C_{3,p}\C_{4,p'}\C_{5,p'}\C_{6,p'} ,\
\C_{1,p}\L_{2,p}\C_{3,p-p'}\L_{4,p'}\C_{5,p'}, \\
\nonumber &\C_{1,p}\C_{2,p}\C_{3,p-p'}\L_{4,p'}\C_{5,p'},\ 
\C_{1,p}\C_{2,p}\C_{3,p-p'}\C_{4,p'}\C_{5,p'}, \\ 
& \C_{1,p}\L_{2,p}\C_{3,p-p'}\C_{4,p'}\C_{5,p'} ,\ 
\C_{1,p}\L_{2,p-p'}\C_{3,p'}\ ,
\end{align}
where $\L_{2,p-p'}$ is perpendicular to the boundary, in addition to any such path obtained when some of the path pieces (but not all) 
have zero length. 
\end{corollary}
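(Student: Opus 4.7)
The plan is to apply Theorem~\ref{thm:optimalityConditionsPv} to each of the two regional subpieces of $\X$ separately, and then enumerate the admissible configurations at the single boundary crossing.

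Under the hypothesis, $\X$ splits into two subpieces $\X_0\subset\P_p$ and $\X_1\subset\P_{p'}$. By Theorem~\ref{thm:optimalityConditionsPv}(c), each $\X_j$ is Dubins optimal between its endpoints, so each is a concatenation of at most three $\{\C^+,\C^-,\L\}$-pieces of one of the six forms in \eqref{eqn:DubinsPaths}. This immediately yields the ``at most three pieces per region'' bound and, after allowing zero-length pieces, permits each regional subpath to be written as $\C\C\C$ or $\C\L\C$, beginning and ending with a $\C$-piece.

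Denote the terminal piece of $\X_0$ by $A$ and the initial piece of $\X_1$ by $B$; because $\ang$ is continuous at the crossing, $A$ and $B$ share the same tangent direction at the boundary. The analysis then proceeds by cases. If $A$ and $B$ are both $\C^+$ or both $\C^-$, then by Theorem~\ref{thm:optimalityConditionsPv}(b)--(c) the input $u$ has the same sign on both sides of the crossing, so $A$ and $B$ form a single composite arc with curvature jumping from $1/r_p$ to $1/r_{p'}$; this is the $\C_{p-p'}$-piece. Combining the two Dubins skeletons ($\C\C\C$ or $\C\L\C$) on each side of this arc produces the four middle-arc types 4--7 of \eqref{eqn:PVoptimalPaths}. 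If instead $A$ and $B$ are $\C$-pieces of opposite turning direction, no merging occurs and pairing the Dubins skeletons in each region yields the six-piece types 1--3. Finally, if after absorbing zero-length outer $\C$s the boundary-adjacent pieces are both $\L$, Theorem~\ref{thm:optimalityConditionsPv}(d), applied in the coordinate frame \eqref{eqn:coordTrans} that places the boundary on the horizontal axis, forces the orientation at the boundary to be a multiple of $\pi$; both $\L$s are then perpendicular to the boundary and merge into $\L_{p-p'}$, collapsing the Dubins skeletons to $\C\L$ in $\P_p$ and $\L\C$ in $\P_{p'}$ and producing case 8.

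The main delicate point will be to ensure the enumeration is complete without double counting: mixed $\C/\L$ and $\L/\C$ transitions at the boundary must be recognized as subpath reductions of cases 1--3 with an outer $\C$ of zero length, and the opposite-sign $\C\L\C/\C\L\C$ pairing must be argued away by applying Bellman's principle to the interior $\L\C\C\L$ segment together with the boundary conditions of Lemma~\ref{propo:PropertiesTwoRegions}, so that it reduces to a subpath of one of the listed types. All further zero-length reductions are absorbed by the closing clause of the statement.
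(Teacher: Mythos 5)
Your overall skeleton is correct and is the same as the paper's (the paper offers no explicit proof, stating only that the corollary ``follows directly from Dubins' result and Theorem~\ref{thm:optimalityConditionsPv}''): Dubins optimality of each regional subpiece via Theorem~\ref{thm:optimalityConditionsPv}(c) gives the ``at most three pieces per region'' bound and the $\C\C\C/\C\L\C$ normal forms, and the eight listed types are exactly the pairings of these normal forms classified by whether the two boundary-adjacent pieces merge. The gap is in the one step that carries all the content beyond counting: excluding the unmerged $\C_{1,p}\L_{2,p}\C_{3,p}\C_{4,p'}\L_{5,p'}\C_{6,p'}$ pairing (opposite-turning arcs at the boundary, with an $\L$ piece in each region), which is the only combination of Dubins skeletons absent from \eqref{eqn:PVoptimalPaths}. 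You flag this as the ``delicate point'' but propose to dispose of it by Bellman's principle applied to the interior $\L\C\C\L$ segment, which does not work as stated: that interior segment is itself a one-crossing problem with heterogeneous speeds and radii, and no comparison path is exhibited that beats it. The argument that actually closes this case is the adjoint computation the paper uses to prove Corollary~\ref{coro:NonOptimalPaths}: the switch of $u$ at the boundary forces $\adjang=0$ there by continuity of $\adjang$ across the jump (Lemma~\ref{propo:PropertiesTwoRegions}(b)); item d) applied to the $\L$ pieces on both sides gives $\overline{\adjy}_p\tan\ang_p=\overline{\adjx}$ and $\overline{\adjy}_{p'}\tan\ang_{p'}=\overline{\adjx}$; substituting these into the Hamiltonian value condition (item e) across the jump yields $1-\cos\Delta\ang_p=1-\cos\Delta\ang_{p'}=0$, so the boundary arcs degenerate to zero length and the configuration collapses into case 8. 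You should carry out (or cite) that computation rather than defer to Bellman.

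A secondary caveat: your derivation of the perpendicularity of $\L_{2,p-p'}$ in case 8 invokes Theorem~\ref{thm:optimalityConditionsPv}(d), whose hypothesis requires $v_p\neq v_{p'}$; when $v_p=v_{p'}$ and only the radii differ, the $\L$ crossing need not be orthogonal (the paper's own simulations in Section~\ref{sec:Example1} exhibit non-orthogonal $\C^+\L\C^+$ crossings in exactly that regime). Make that assumption explicit where you use it.
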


Figure~\ref{fig:OptimalPathsAtBoundary} 
depicts families of optimal paths across the boundary of two regions with minimum turning radius $r_{p}$ in region ${\cal P}_{p}$ and with minimum turning radius $r_{p'}$ in region ${\cal P}_{p'}$, $p,p' \in \Pset$.
Item d) in Theorem~\ref{thm:optimalityConditionsPv} implies that $\L$-type paths at the boundary of regions with different velocity are optimal
only if they are orthogonal to 
the boundary, depicted by
Figure~\ref{fig:OptimalPathsAtBoundaryL}.

Another useful corollary is the following. 

\begin{corollary}{(nonoptimal paths)}
\label{coro:NonOptimalPaths}
Let the curve $\X$ describe a minimum-length path that solves Problem~\ref{problem:NRegions} and let $\x, \y$ and  $\ang$ 
be its associated functions with input $u$.
Define the function $q$ 
following the 
construction below  
 \eqref{eqn:SwitchingSet}.
 Suppose Assumption~\ref{assump:OptimalTrajectories} holds.
For every one boundary cross between two adjacent regions $p$ and $p'$, the minimum-time path cannot be a smooth concatenation 
of $\C,\L$ paths pieces with more than three pieces in each region and cannot belong to the following four types of paths:
\begin{align}\label{eqn:PVNonoptimalPaths}
\nonumber& \C_{1,p}\L_{2,p-p'}\C_{3,p} , \
\C_{1,p}\L_{2,p}\C_{3,p}\L_{4,p'}\C_{5,p'},\\  
& \C_{1,p}\L_{2,p}\C_{3,p'}\L_{4,p'}\C_{5,p'}, \
\C_{1,p}\L_{2,p}\C_{3,p}\C_{4,p'}\L_{5,p'}\C_{6,p'} , 
\end{align}
where $\L_{2,p-p'}$ is non-orthogonal to the boundary, in addition to any such path obtained when the first and/or last path pieces
have zero length. 
\end{corollary}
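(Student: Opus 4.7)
The plan is to combine Dubins' classical result with the adjoint and Hamiltonian properties from Lemma~\ref{propo:PropertiesTwoRegions} and Theorem~\ref{thm:optimalityConditionsPv}, dispatching first the piece-count claim and then each of the four prohibited configurations in the listed order. For the ``at most three pieces per region'' bound, I would invoke item~c) of Theorem~\ref{thm:optimalityConditionsPv}: the restriction of $\X$ to each $\P_{q_j}$ is a Dubins-optimal path between its entry and exit points with uniform parameters $v_{q_j},\overline{u}_{q_j}$, so the original bound from \cite{Dubins57} applies immediately within each region.

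For the first prohibited type, in which the straight piece $\L_{2,p\text{-}p'}$ itself straddles the boundary, that single $\L$ plays the role of both the last piece of $\X_{j-1}$ and the first piece of $\X_{j}$, so item~d) of Theorem~\ref{thm:optimalityConditionsPv} forces $\ang$ at the crossing to be $0 \pmod{\pi}$, i.e., orthogonal to the interface in the coordinates of~\eqref{eqn:coordTrans}, contradicting the non-orthogonality stipulation. For the second and third types, in which an $\L$-piece abuts the boundary on one side and a $\C$-piece abuts on the other, my approach would exploit continuity of $\adjang$ across the crossing from Lemma~\ref{propo:PropertiesTwoRegions}.b): since $\adjang \equiv 0$ on any $\L$-piece (by items~c)--d) of that lemma), continuity pins $\adjang=0$ at the boundary end of the adjacent $\C$-piece. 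Evaluating the Hamiltonian once on the $\L$-piece (where $\adjy\tan\ang_p=\adjx$ collapses it to $\adjx v_p/\sin\ang_p$) and once at the boundary, and using the constancy from Lemma~\ref{propo:PropertiesTwoRegions}.e), yields $\cos(\ang^{\!*}-\ang_p)=1$. This forces the intermediate $\C$-piece to have zero turning and hence zero length, collapsing the would-be five-piece concatenation into an admissible form already catalogued by Corollary~\ref{coro:OptimalPaths}.

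The fourth type splits on the turning sense of the two middle circles. When $\C_3$ and $\C_4$ turn in the same direction, continuity of $\adjang$ (nonzero and of fixed sign across the boundary) lets them concatenate into a single $\C_{p\text{-}p'}$ piece spanning the interface, so the path is really of the optimal family $\C\L\C_{p\text{-}p'}\L\C$ of Corollary~\ref{coro:OptimalPaths} rather than a genuinely six-piece path. When $\C_3$ and $\C_4$ turn in opposite directions, item~c) of Lemma~\ref{propo:PropertiesTwoRegions} forces $\adjang$ to change sign across the crossing, and continuity then pins $\adjang=0$ at the interface; the same Hamiltonian-plus-item-d) computation used for the previous types trivializes both $\C_3$ and $\C_4$ and again reduces the path to a shorter admissible form. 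I expect the sign bookkeeping in this last case, and the verification that the derived degeneracy of $\C_3,\C_4$ really leaves no loophole, to be the only non-mechanical step; the remainder is a routine application of Lemma~\ref{propo:PropertiesTwoRegions} and Theorem~\ref{thm:optimalityConditionsPv} paralleling the proof of Theorem~\ref{thm:refractionPv}.
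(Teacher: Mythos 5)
Your proposal is correct and follows essentially the same route as the paper's proof: Dubins' bound for the per-region piece count, item d) of Theorem~\ref{thm:optimalityConditionsPv} to kill the non-orthogonal straddling $\L$, and continuity of $\adjang$ at the jump together with the Hamiltonian value condition and Lemma~\ref{propo:PropertiesTwoRegions}.d) to force $\Delta\ang_p=\Delta\ang_{p'}=0$, degenerating the boundary-adjacent $\C$ pieces in the remaining types. Your explicit split on the turning sense of $\C_{3,p}\C_{4,p'}$ in the fourth type is a small but worthwhile refinement the paper leaves implicit: its assertion that a ``switch of the input $u$ at the boundary'' yields $\adjang=0$ there is only valid in the opposite-sense case depicted in Figure~\ref{fig:NonOptimalPathsAtBoundary}, while the same-sense configuration is, as you observe, just a relabeling of the $\L_p\C_{p-p'}\L_{p'}$ refraction-law family of Corollary~\ref{coro:OptimalPaths}.
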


\ShowProofs{
\begin{proof}
From the results in \cite{Dubins57},
any path with more than three path pieces in each region is nonoptimal.
 In addition, by item d) in Theorem~\ref{thm:optimalityConditionsPv}, 
an $\L_{p-p'}$-type of the path intersecting the boundary between $P_p$ and $P_{p'}$	must be orthogonal to the boundary. 
This rules out the first path in \eqref{eqn:PVNonoptimalPaths}.
The other three  paths in \eqref{eqn:PVNonoptimalPaths} have segments of type $\L_p\C_p\L_{p'}$, $\L_p\C_{p'}\L_{p'}$ or $\L_p\C_p\C_{p'}\L_{p'}$, which 
have a switch of the input $u$ at the boundary and correspond to a transition from or to a path 
of type $\C$.  
This implies that $\adjang=0$ at the boundary since, by item b) in Lemma~\ref{propo:PropertiesTwoRegions}, it is a continuous function. 
To reach a contradiction, suppose these path segments are optimal. Then, $\adjang=0$ at the boundary implies that \eqref{eqn:HamiltonianJump2} becomes 
\begin{eqnarray}\label{eqn:HamiltonianContinuitynoInputAngStar3}
\nonumber \overline{\adjx} v_{p} \sin \ang^*+\overline{\adjy}_p v_{p} \cos \ang^* &=& 
\overline{\adjx} v_{p'} \sin \ang^*+\overline{\adjy}_{p'} v_{p'} \cos \ang^* \\ &=&c+\adj_{\circ} \ .
\end{eqnarray}
Then, by subtracting \eqref{eqn:HamiltonianContinuitynoInputAngStar2} from \eqref{eqn:HamiltonianContinuitynoInputAngStar3} and substituting $\overline{\adjy}_p\tan \ang_p = \overline{\adjx}$ 
and
$\overline{\adjy}_{p'} \tan \ang_{p'} = \overline{\adjx}$ (by item d) in Lemma~\ref{propo:PropertiesTwoRegions}), we get
\begin{align*}
&\frac{\overline{\adjx} v_{p}}{\sin \ang_p}(\sin \ang_p (\sin \ang^*-\sin \ang_p)+ \cos \ang_p ( \cos \ang^*- \cos \ang_p))\\
&= \frac{\overline{\adjx} v_{p}}{\sin \ang_p}(1-\cos \Delta \ang_p)  = 0\\
&\frac{\overline{\adjx} v_{p'}}{\sin \ang_{p'}} (\sin \ang_{p'} (\sin \ang^*-\sin \ang_{p'})+ \cos \ang_{p'} ( \cos \ang^*- \cos \ang_{p'}))\\&= \frac{\overline{\adjx} v_{p'}}{\sin \ang_{p'}}(1-\cos \Delta \ang_{p'})  = 0\ ,
\end{align*}
which implies that $\Delta \ang_p=\Delta \ang_{p'}=0$, i.e., no path piece of type $\C$ is allowed before or after the boundary for optimal paths, which leads to a contradiction.
\end{proof}}

Figure~\ref{fig:NonOptimalPathsAtBoundary}
depicts 
the path types
 that Corollary~\ref{coro:NonOptimalPaths}
determines to be nonoptimal.

\section{Numerical validation}
\label{sec:Example}

\subsection{Two Heterogeneous Regions}\label{sec:Example1}
For the purposes of illustrating our main results, we first consider the case
of two regions, $\P_1$ and $\P_2$, where 
$\P_1 = \defset{(\x,\y)\in\reals^2}{y> 0}$ and
$\P_2 = \defset{(\x,\y)\in\reals^2}{y\leq 0}$, and
choose initial and final configurations, given by
$(x,y)^i \in \P_1^{\circ}$ with initial velocity vector $\nu^i$ and
$(x,y)^f \in \P_2^{\circ}$ with final velocity vector $\nu^f$, respectively,
so that optimal paths between the regions cross only once, that is, 
for every optimal path there exists a unique crossing point
$(x^*,y^*)\in \overline{\P}_1\cap \overline{\P}_2$ with a unique velocity vector direction. 
We use the off-the-shelf software package GPOPS \cite{Rao.10} to verify the necessary conditions for optimality as well as the refraction law at boundary derived in Section~\ref{sec:Results}. 
The simulations were performed on a 2.2 GHz Intel Core i7 CPU with 15 mesh refinement iterations, a mesh tolerance of $10^{-5}$, and between 25 to 50 nodes per interval. The average CPU time for each simulation was 83.3 s.

\begin{figure}[htp!] 
\begin{center}
\subfigure[Optimal paths with $r_1=v_1$, $r_2=v_2$ (Problem~\ref{problem:NRegions}$\star$). \tiny{Minimum times (from left to right): \{25.32s, 15.79s, 10.68s,  8.02s, 7.29s\}; Values of adjoints: $\overline{\adjx}=$\{-0.74, -0.59, -0.34, -0.09, 0.53\}, $\overline{\adjy}_1=$\{3.93, 1.91,0.94,0.49,0.03\}, $\overline{\adjy}_2$ =\{0.68, 0.81, 0.84, 1.00, 0.85\}, $\overline{\adjang}_{min}=$\{-1.56,-1.47,-1.42,-1.57,-3.09\}, $\overline{\adjang}_{max}$=\{0.0514, 0.0135, 4.16$\times10^{-5}$, 4.94$\times10^{-5}$, 4.42$\times10^{-5}$\} } \label{fig:classic}]
   {\includegraphics[width=.22\textwidth,trim=15mm 2mm 10mm 2mm]{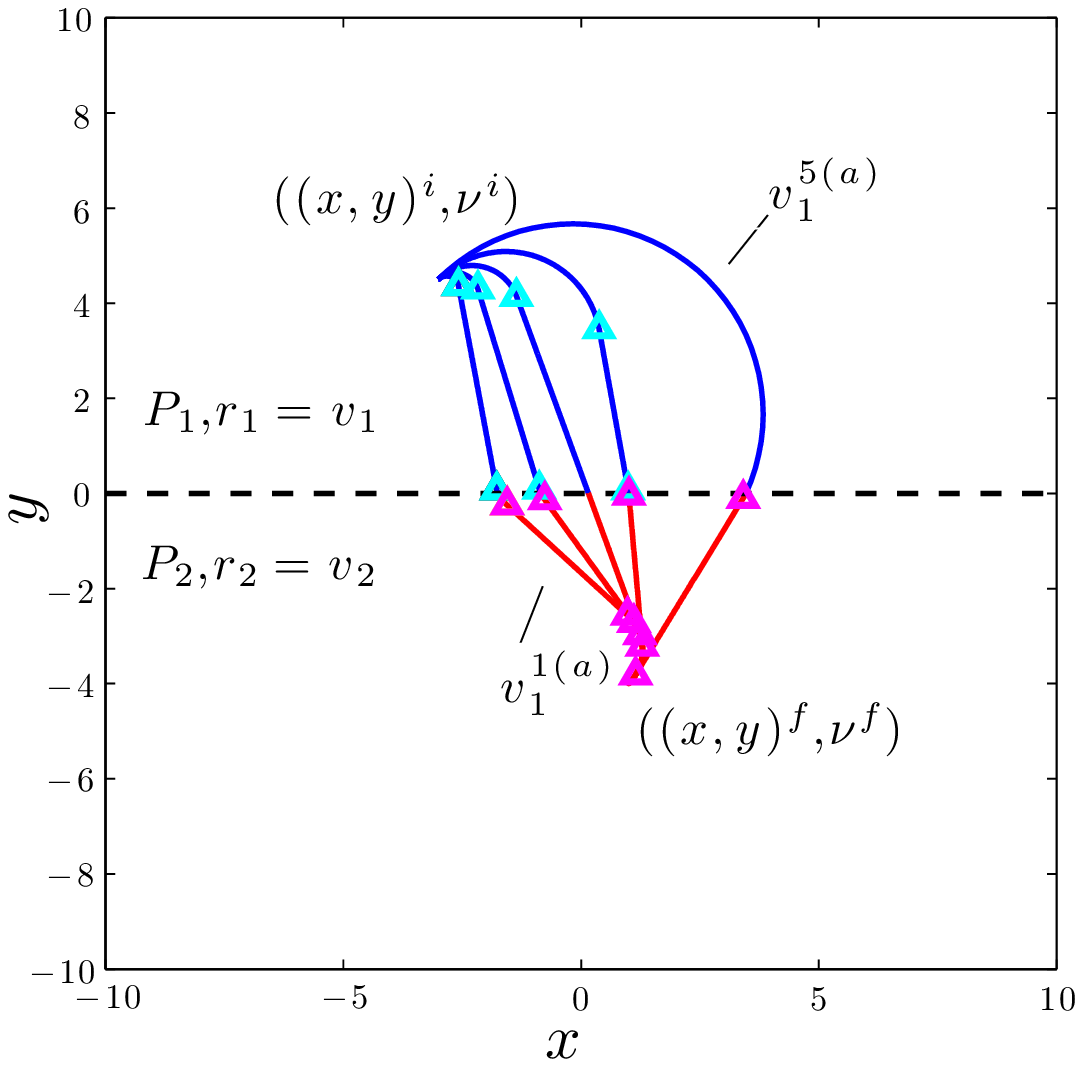}}
\quad  
\subfigure[Optimal paths with $r_1=0.5 v_1$, $r_2=v_2$. \tiny{Minimum times (from left to right): \{24.64s, 15.21s, 10.17s,  7.44s, 6.05s\} ; Values of adjoints: $\overline{\adjx}=$\{-0.72, -0.53, -0.25, 0.11, 0.90\}, $\overline{\adjy}_1=$\{3.93,\allowbreak 1.90, 0.91, 0.45, 0.24\}, $\overline{\adjy}_2=$\{0.65, 0.77, 0.91, 0.98, 1.00\}, $\overline{\adjang}_{min}=$\{-1.07, -0.90, -0.67, -0.67, -0.77\}, $\overline{\adjang}_{max}=$\{0.0378, 0.0114,4.86$\times10^{-5}$, 4.70$\times10^{-5}$, 3.98$\times10^{-5}$\}}   \label{fig:sim3b}]
   {\includegraphics[width=.22\textwidth,trim=15mm 2mm 10mm 2mm]{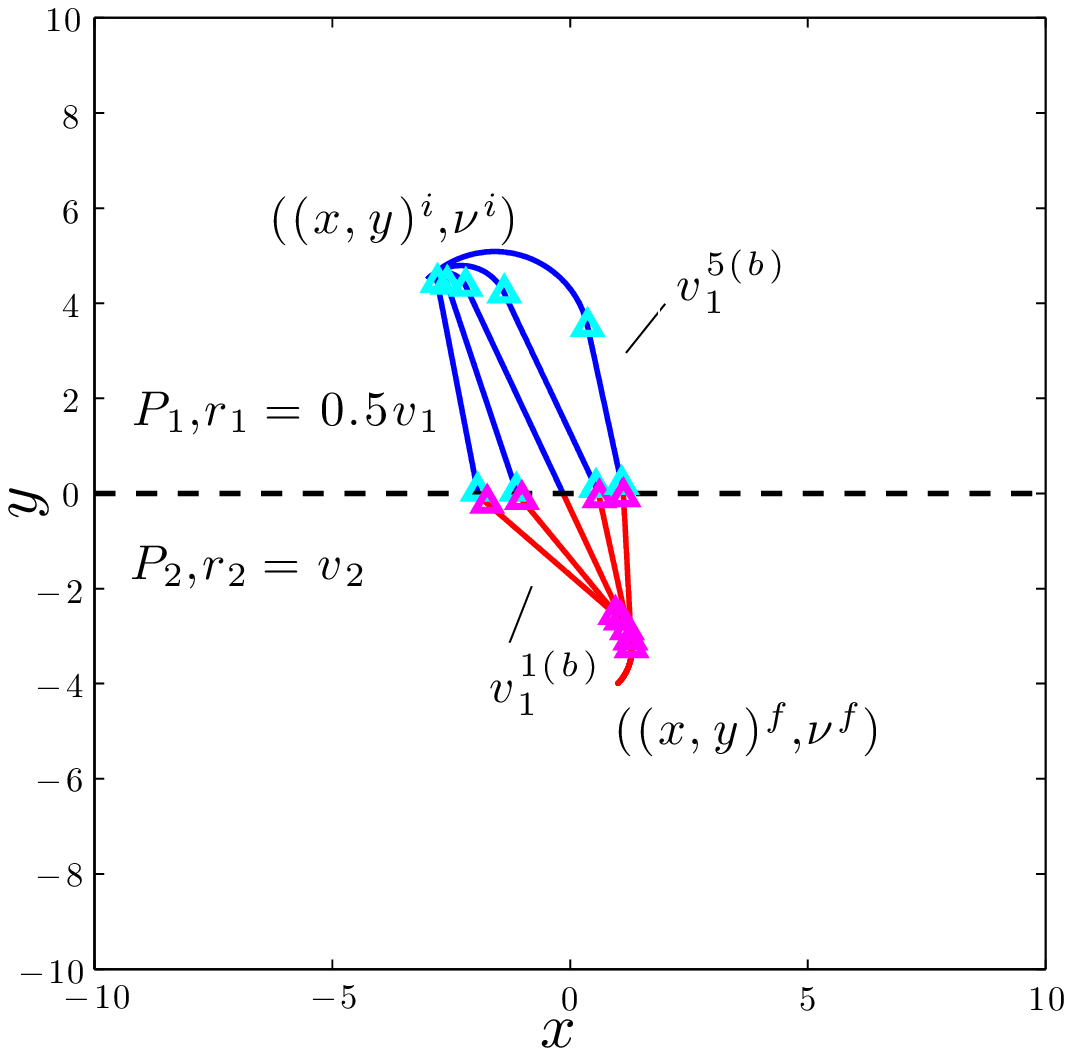}}
   \\
\subfigure[Optimal paths with $r_1=1.5 v_1$, $r_2=v_2$. \tiny{Minimum times (from left to right): \{25.99s , 16.39s, 11.27s,  8.85s, 13.53s\}; Values of adjoints: $\overline{\adjx}=$\{-0.72, -0.53, -0.25, 0.11, 0.90\}, $\overline{\adjy}_1=$\{3.94,\allowbreak 1.93, 0.97, 0.49, -0.50\}, $\overline{\adjy}_2=$\{0.70, 0.85, 0.97, 0.99, 0.44\}, $\overline{\adjang}_{min}=$\{-2.35, -2.24, -2.27, -2.77, -6.07\}, $\overline{\adjang}_{max}=$\{0.0574, 0.0130, 5.05$\times10^{-5}$, 0.0019, 1.5604\}}  \label{fig:sim3c}]
   {\includegraphics[width=.215\textwidth,trim=15mm 2mm 10mm 2mm]{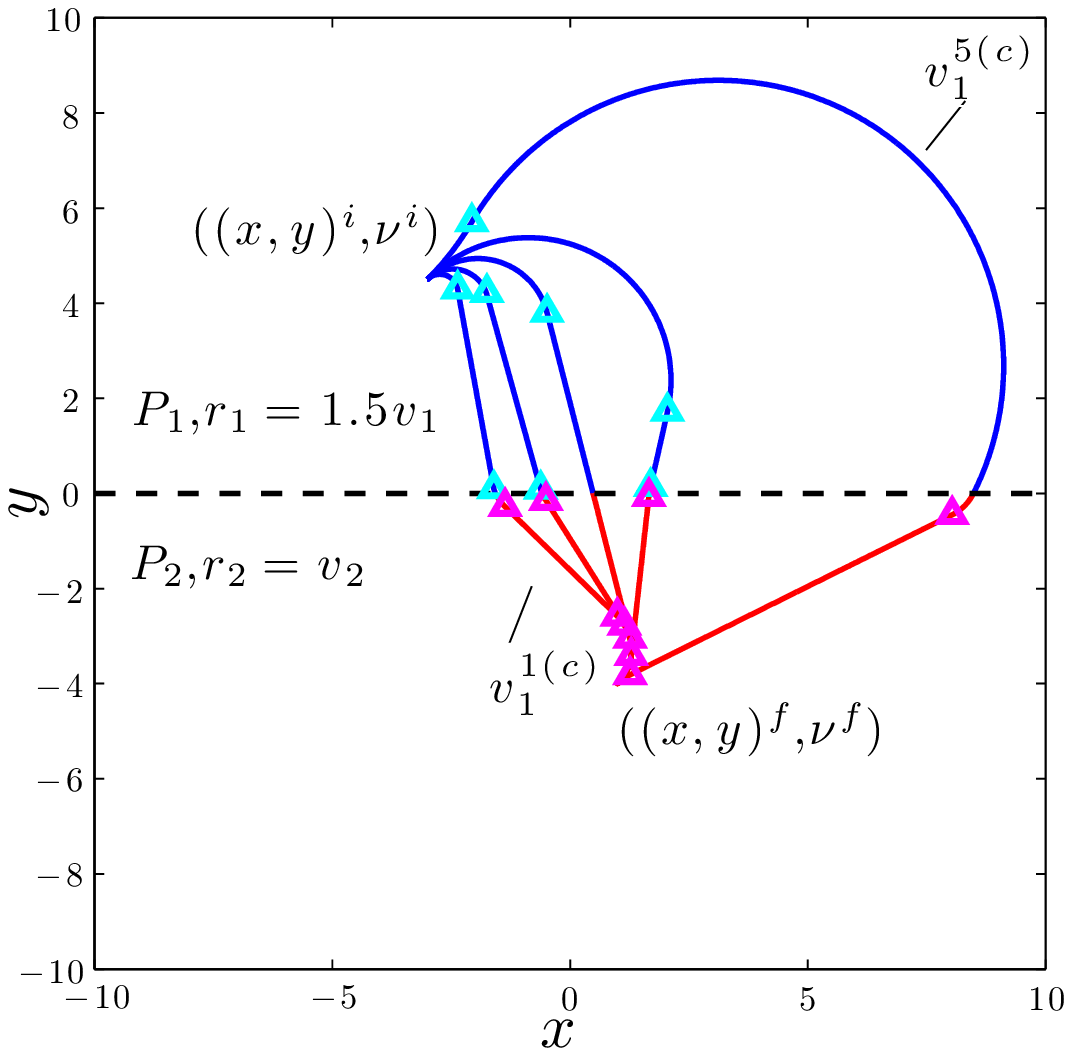}}
 \quad 
\subfigure[Optimal paths with $r_1=0.5 v_1$, $r_2=1.5 v_1$. \tiny{Minimum times (from left to right): \{24.28s, 15.10s, 10.32s,  7.79s, 6.62s\}; Values of adjoints: $\overline{\adjx}=$\{-0.66, -0.60, -0.46, -0.28, -0.15\}, $\overline{\adjy}_1=$\{3.94, 1.91, 0.89, 0.41, 0.20\}, $\overline{\adjy}_2=$\{0.75, 0.80, 0.89, 0.96, 0.99\}, $\overline{\adjang}_{min}=$\{-0.79, -0.73, -1.05, -1.56, -2.46\}, $\overline{\adjang}_{max}=$\{0.0166, 0.0087, 4.14$\times10^{-5}$, 2.08$\times10^{-5}$, 1.01$\times10^{-5}$\} } \label{fig:sim3d}]
   {\includegraphics[width=.22\textwidth,trim=15mm 2mm 10mm 2mm]{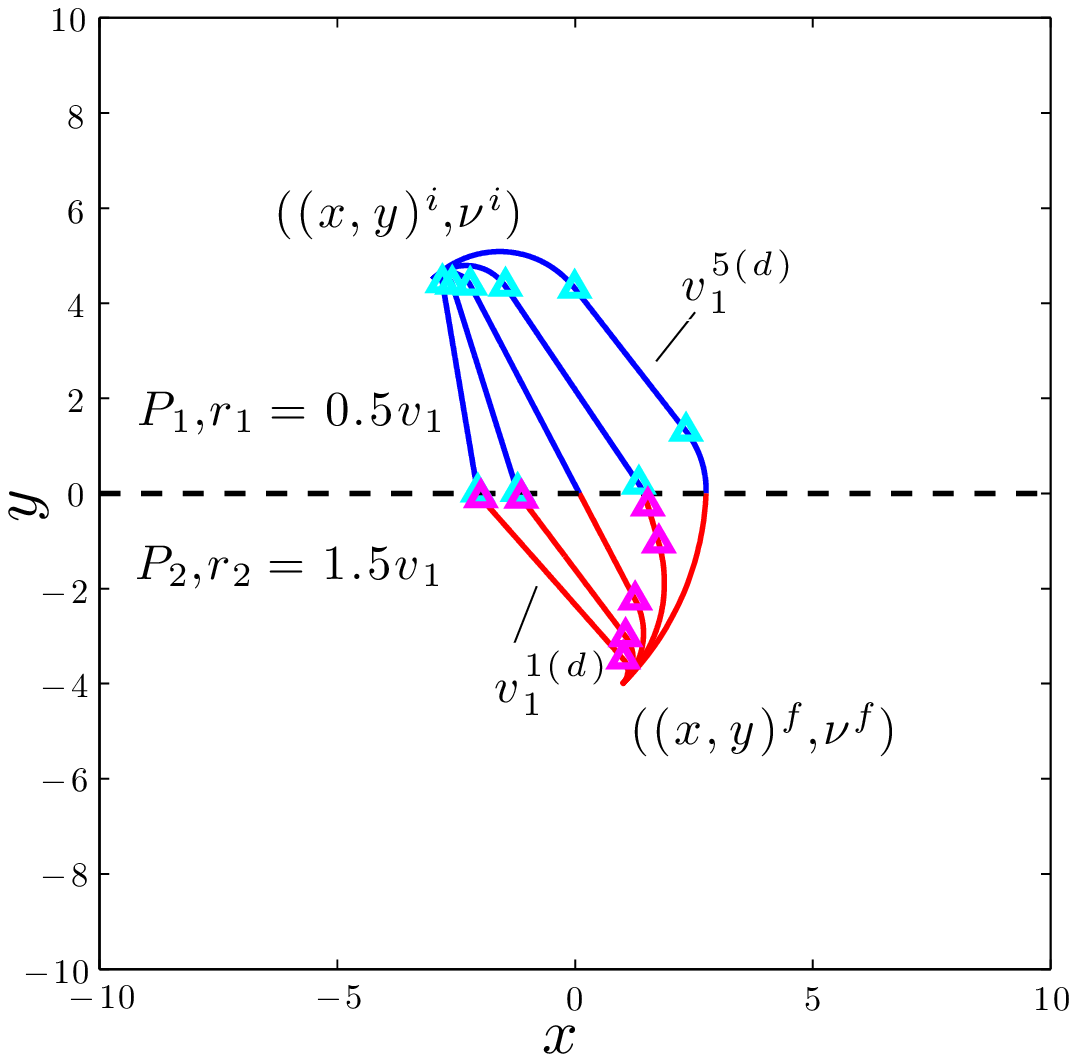}}
\caption{Optimal paths from 
a given initial point 
$((x,y)^i,\nu^i) = ((-3,4.5),(v_1, \angle \frac{\pi}{4}))$
to a final point 
$((x,y)^f,\nu^f) = ((1,-4),(v_2, \angle \frac{5\pi}{4}))$
for different values of the velocity $v_1=\{\frac{1}{4},\frac{1}{2},1,2,4\}$ (from left to right) and minimum turning radii $r_1$ in region $\P_1$, and a constant velocity given by $v_2=1$ and different minimum turning radii $r_2$ in region $\P_2$.
The path for velocity $v^{3(a)}_1=1$ is the classical minimum-time Dubins path. \ricardo{The symbols $\triangle$ mark the junctions of the bang-singular-bang controls, i.e., the locations where the path type changes}.} 
\label{fig:sim3}
  \end{center}
\end{figure}
Figure~\ref{fig:sim3} shows optimal paths from a given initial point to a final point for different values of the velocities, $v_1$ and $v_2$ and minimum turning radii, $r_1$ and $r_2$, in regions $\P_1$ and $\P_2$, respectively. The path for velocity $v^{3(a)}_1$ in Figure~\ref{fig:classic} is the classical minimum-time Dubins path
since $v_1^{3(a)} = v_2 = 1$ and $r_1^{3(a)} = r_2 = 1$.

\begin{table}[t!]
\begin{center}
\caption{Comparison of specified ratios of velocities, $\frac{v_1}{v_2}$, and minimum turning radii, $\frac{r_1}{r_2}$, with computed values from \eqref{eqn:RefractionHSv} and \eqref{eqn:RefractionHSv2}. Entries with ``-'' correspond to the cases when the subpath across the boundary is not of type $\L\C\L$ and, hence, the refraction law does not apply. \label{table:comparison}}
\tiny
    \begin{tabular}{c|cc|cc|cc|cc}
       \toprule
   ~&   ~   & ~ & ~ & ~ & \multicolumn{2}{c|}{Specified}                & \multicolumn{2}{c}{From \eqref{eqn:RefractionHSv}, \eqref{eqn:RefractionHSv2}}                \\
  ~&      $v_1$     & $r_1$     & $v_2$     & $r_2$     & $\frac{v_1}{v_2}$         & $\frac{r_1}{r_2}$ & $\frac{v_1}{v_2}$                                                               & $\frac{r_1}{r_2}$ \\  \midrule

(a) &        0.25  & 0.25  & 1     & 1     & 0.25                      & 0.25              & 0.250                                                                           & 0.250             \\ 
~&        0.5   & 0.5   & 1     & 1     & 0.5                       & 0.5               & 0.499                                                                           & 0.499             \\ 
~&        1     & 1     & 1     & 1     & 1                         & 1                 & 1.000                                                                           & -                 \\ 
~&        2     & 2     & 1     & 1     & 2                         & 2                 & 2.008                                                                           & 2.008             \\ 
~&        4     & 4     & 1     & 1     & 4                         & 4                 & -                                                                               & -                 \\  \midrule
(b) &       0.25  & 0.125 & 1     & 1     & 0.25                      & 0.125             & 0.250                                                                           & 0.126             \\ 
~&        0.5   & 0.25  & 1     & 1     & 0.5                       & 0.25              & 0.500                                                                           & 0.255             \\ 
~&        1     & 0.5   & 1     & 1     & 1                         & 0.5               & 1.000                                                                           & -                 \\ 
 ~&       2     & 1     & 1     & 1     & 2                         & 1                 & 2.003                                                                           & 1.016             \\ 
 ~&       4     & 2     & 1     & 1     & 4                         & 2                 & 4.014                                                                           & 2.084             \\  \midrule
(c) &        0.25  & 0.375 & 1     & 1     & 0.25                      & 0.375             & 0.250                                                                           & 0.372             \\ 
~&        0.5   & 0.75  & 1     & 1     & 0.5                       & 0.75              & 0.499                                                                           & 0.742             \\ 
 ~&       1     & 1.5   & 1     & 1     & 1                         & 1.5               & 1.000                                                                           & -                 \\ 
 ~&       2     & 3     & 1     & 1     & 2                         & 3                 & 1.993                                                                           & 2.966             \\ 
~&        4     & 6     & 1     & 1     & 4                         & 6                 & -                                                                               & -                 \\   \midrule
(d) &        0.25  & 0.125 & 1     & 0.375 & 0.25                      & 0.333             & 0.250                                                                           & 0.336             \\ 
 ~&       0.5   & 0.25  & 1     & 0.75  & 0.5                       & 0.333             & 0.500                                                                           & 0.335             \\ 
 ~&       1     & 0.5   & 1     & 1.5   & 1                         & 0.333             & 1.000                                                                           & -                 \\ 
~&        2     & 1     & 1     & 3     & 2                         & 0.333             & 1.999                                                                           & 0.336             \\ 
 ~&       4     & 2     & 1     & 6     & 4                         & 0.333             & -                                                                               & -                 \\
     \bottomrule
    \end{tabular}
    \end{center}
\end{table}

Paths $v_1^{3(b)}$, $v_1^{3(c)}$ and $v_1^{3(d)}$ in Figures~\ref{fig:sim3b}, ~\ref{fig:sim3c} and ~\ref{fig:sim3d} have equal velocities of travel in both regions, 
and are very similar to the classical Dubins path, as the path piece across the boundary can be of type $\L$ and non-orthogonal to the boundary, i.e., with ``no refraction," resulting in paths of type $\C^+\L\C^+$. The only difference lies in the curvature constraint of the paths in the different regions.

Paths with smaller and larger velocities $v_1 \in \{\frac{1}{4},\frac{1}{2},2,4\}$ are also depicted showing that the particle travels a larger distance
in the region with the larger velocity of travel, as one would expect. Similarly, a larger minimum turning radius in each region leads to larger traveled distance. Turns of types $\C^-$ and $\C^+$ occur at points nearby the boundary between the regions for optimal paths $\{v_1^{1(a)}, v_1^{2(a)}, v_1^{1(b)}, v_1^{2(b)},v_1^{1(c)}, v_1^{2(c)}, v_1^{4(c)}, v_1^{1(d)},v_1^{2(d)}\}$ and $\{v_1^{4(a)}, v_1^{5(a)}, v_1^{4(b)}, v_1^{5(b)},v_1^{5(c)}, v_1^{4(d)},v_1^{5(d)}\}$, respectively. With the exception of $\{v_1^{5(a)}, v_1^{5(c)},v_1^{5(d)}\}$, these paths have a subpath of type $\L\C\L$ across the boundary and can be used to verify the derived refraction law given in \eqref{eqn:RefractionHSv} and \eqref{eqn:RefractionHSv2}. A comparison of the specified ratios of velocities, $\frac{v_1}{v_2}$, and minimum turning radii, $\frac{r_1}{r_2}$ with the computed values using the refraction law is shown in Table \ref{table:comparison}.  This table demonstrates an agreement of the simulation results with theory, where the slight differences can be attributed to the numerical imprecision of the optimal control solver.

\subsection{Three Heterogeneous Regions}\label{sec:Example2}

\begin{figure}[htp!] 
\begin{center}
\subfigure[Optimal paths with $\{v_1=0.25,r_1=0.5\}$ in $\P_1$;\newline $\{v_2=2, r_2=1\}$ in $\P_2$; and $\{v_3=0.25, r_3=0.6\}$ in $\P_3$.  \label{fig:lanechange}]{ \includegraphics[width=0.44\textwidth,trim=55mm 1mm 25mm 1mm,clip]{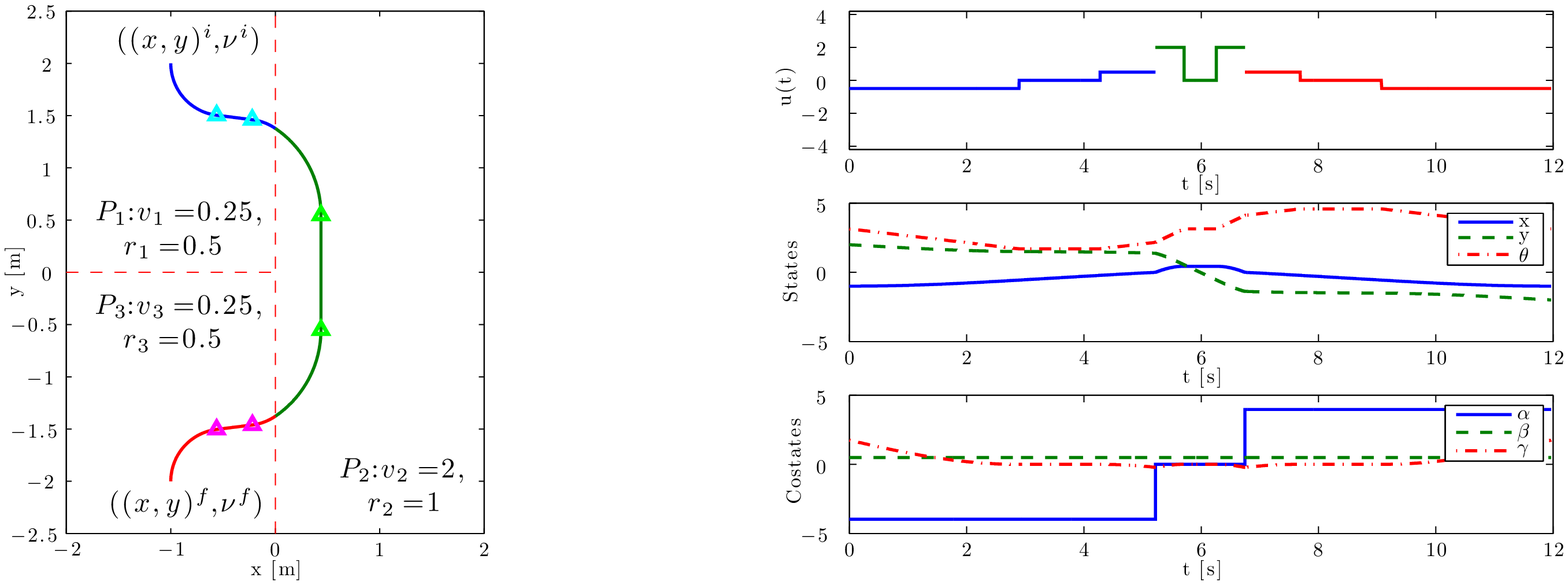}}
\subfigure[Optimal paths with $\{v_1=2,r_1=1\}$ in $\P_1$; \newline$\{v_2=0.5, r_2=0.8\}$ in $\P_2$; and $\{v_3=1, r_3=0.5\}$ in $\P_3$.  \label{fig:straight}]
   {\includegraphics[width=0.44\textwidth,trim=55mm 1mm 25mm 1mm,clip]{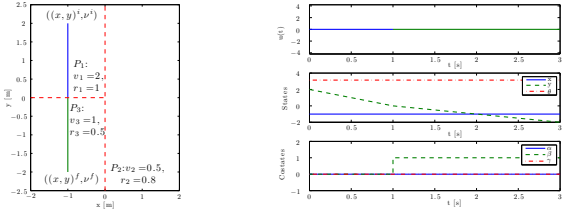}}
\subfigure[Optimal paths with $\{v_1=0.25,r_1=0.75\}$ in $\P_1$;\newline $\{v_2=2, r_2=0.5\}$ in $\P_2$; and $\{v_3=0.5, r_3=0.3\}$ in $\P_3$.  \label{fig:right}]
   {\includegraphics[width=0.44\textwidth,trim=55mm 1mm 25mm 1mm,clip]{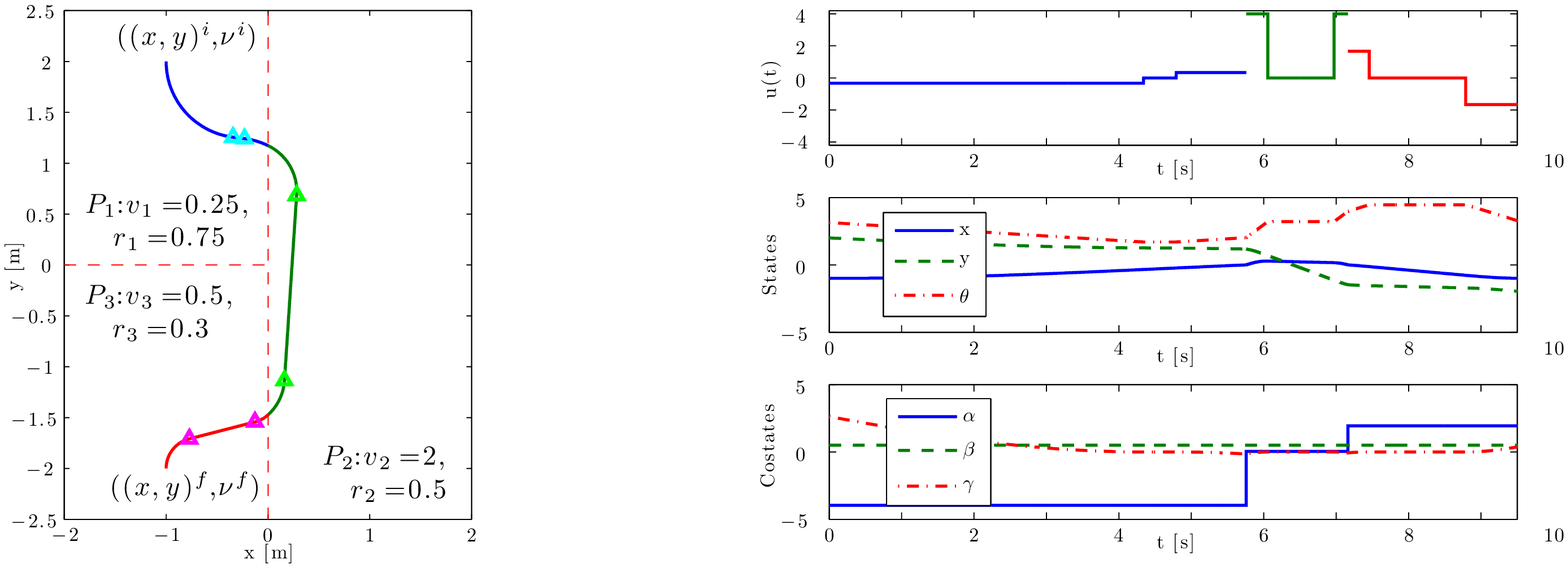}}
\caption{
Optimal paths from 
a given initial point 
$((x,y)^i,\nu^i) = ((-1,2),(v_1,\angle \frac{3\pi}{2}))$
to a final point 
$((x,y)^f,\nu^f) = ((-1,-2),(v_3,\angle \frac{3\pi}{2}))$
for different values of the velocity and minimum turning radius in each of the three regions, $\P_1$, $\P_2$ and $\P_3$, and history of control inputs (filtered), states and adjoint states along the optimal paths. \ricardo{The symbols $\triangle$ mark the junctions of the bang-singular-bang controls, i.e., the locations where the path type changes.}}
\label{fig:sim2}
  \end{center}
\end{figure}

The same optimal control solver that was used to compute optimal paths when there is only one crossing can also be  applied to the case of $N$ regions. For the general case of multiple crossings, computation is more involved, and the ability of the solver to find the global optimum strongly depends on having good initial guesses. We demonstrate this with an example with three regions, each with different velocities of travel and minimum turning radii, for a set of three different settings. 
The simulations were implemented on a 2.2 GHz Intel Core i7 CPU with 25 mesh refinement iterations, a mesh tolerance of $10^{-6}$, and  25 -- 35 nodes per interval. The average CPU time for each simulation was 60.4 s.

Figure~\ref{fig:sim2} shows optimal paths from a given initial point $((x,y)^i,\nu^i) = ((-1,2),(v_1,\angle \frac{3\pi}{2}))$ in region $\P_1$ to a final point  $((x,y)^f,\nu^f) = ((-1,-2),(v_3,\angle \frac{3\pi}{2}))$ in region $\P_3$. In this case, the particle has the option of moving directly from region $\P_1$ to $\P_3$ or passing through region $\P_2$ on the way to the final point in $\P_3$. In fact, the setting in Figure~\ref{fig:lanechange} is equivalent to an instance of a lane changing problem, where there are two lanes with different coefficient of frictions and a driver needs to decide whether to stay on the slow lane or to take a detour on the fast lane. Regions $\P_1$ and $\P_3$ are identical in size, geometry, and properties, and the union of the two regions can be seen as the slow lane, whereas the region on the right, $\P_2$ is the fast lane. For the setting in Figure~\ref{fig:lanechange}, the particle travels through region $\P_2$ because, although the traveled distance is increased, the travel time is decreased. The control input history also shows that the control $u$ is of the ``bang-singular-bang" family as shown in Theorem~\ref{thm:optimalityConditionsPv}, while the states and adjoint states satisfy the properties stated in Lemma~\ref{propo:PropertiesTwoRegions} and the refraction law shown given Theorem~\ref{thm:refractionPv} (after taking the coordinate transformation given in \eqref{eqn:coordTrans} into consideration).

On the other hand, Figure~\ref{fig:straight} depicts the setting in which  region $\P_2$ is slower than both $\P_1$ and $\P_3$. Hence, the minimum-time path is also the minimum-distance path, passing directly from $\P_1$ to $\P_3$ in a straight path, orthogonal to the boundary, as necessitated by the refraction law in Theorem~\ref{thm:refractionPv}. The optimal path shown in Figure~\ref{fig:right} is another instance in which  region $\P_2$ is a fast region. Hence, the minimum-time path takes advantage of this by traveling through $\P_2$ on the way to $\P_3$. Unlike the optimal path in Figure~\ref{fig:lanechange}, there is an asymmetry about the $x$-axis because of the heterogeneity in velocities of travel and minimum turning radii. In these two cases, the control input, the states and adjoint states also satisfy Theorem~\ref{thm:optimalityConditionsPv}, Lemma~\ref{propo:PropertiesTwoRegions}, and Theorem~\ref{thm:refractionPv}.

\begin{figure}[tp!] 
\begin{center}
\includegraphics[width=.235\textwidth,trim=15mm 2mm 10mm 2mm]{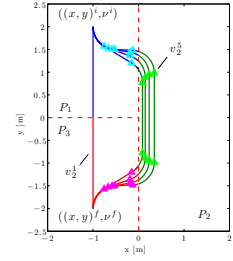}
\caption{Optimal paths from 
a given initial point 
$((x,y)^i,\nu^i) = ((-1,2),(v_1,\angle \frac{3\pi}{2}))$
to a final point 
$((x,y)^f,\nu^f) = ((-1,-2),(v_3,\angle \frac{3\pi}{2}))$
for constant velocities given by $v_1=v_3=0.25$ and minimum turning radii $r_1=r_3=0.5$ in regions $\P_1$ and $\P_3$, respectively, as well as different values of the velocity $v_2=\{0.475,0.48,0.75,1.5,5\}$ (from left to right) and a constant minimum turning radius $r_2=0.5$ in region $\P_2$. The symbols $\triangle$ marks the junctions of the bang-singular-bang controls, i.e. the locations where the path type changes. \tiny{ Minimum times: \{16.00s, 15.96s, 14.19s,  12.37s, 10.95s\}; Values of adjoints: $\overline{\adjx}=\{0.00, 2.08, 1.33, 0.67, 0.20\}$, $\overline{\adjy}_1=\{4.00,\allowbreak -3.41,-3.77,-3.94,-4.00\}$, 
$\overline{\adjy}_2=$\{4.00, -2.82$\times10^{-5}$, 3.59$\times10^{-5}$, -4.03, -9.92$\times10^{-6}$\}, $\overline{\adjy}_3=$\{4.00, 3.41, 3.77, 3.94, 4.00\}, $\overline{\adjang}_{min}=$\{0.00, -0.18,-0.20, -0.16, -0.07\}, $\overline{\adjang}_{max}=$\{0.00, 0.96, 1.33,1.67,1.90\}}}
\label{fig:sim3Reg}
  \end{center}
\end{figure}

It is also interesting to note that a phase transition can be observed with varying velocities of the regions. Figure~\ref{fig:sim3Reg} shows optimal paths from a given initial point 
$((x,y)^i,\nu^i) = ((-1,2),(v_1,\angle \frac{3\pi}{2}))$ to a final point $((x,y)^f,\nu^f) = ((-1,-2),(v_3,\angle \frac{3\pi}{2}))$
for constant velocities given by $v_1=v_3=0.25$ and minimum turning radii $r_1=r_3=0.5$ in regions $\P_1$ and $\P_3$, respectively. In region $\P_2$, the minimum turning radius is $r_2=0.5$, while the velocity is decreased from $v_2=5$ to zero. As expected, one can observe from Figure~\ref{fig:sim3Reg} (viewed from right to left) that the smaller the velocity of region $\P_2$ is, the shorter the path the particle traverses in that region. In fact, a phase transition occurs when $v_2 \approx 0.476$. 
For velocities below this value, the particle travels from $\P_1$ to $\P_3$ without going through $\P_2$.

\section{Conclusion}
\label{sec:Conclusion}

We have derived necessary conditions for the optimality of paths
with velocity and minimum turning constraints across $N$ regions. 
To establish our results, we formulated
the problem as a hybrid optimal control problem and used 
optimality principles from the literature. 
Our results provide verifiable
conditions for optimality of paths. These include conditions 
both in the interior of the regions and at their common boundary, 
as well as a refraction law for the angles which generalizes
Snell's law of refraction in optics to the current setting.
Applications of our results include optimal motion planning tasks 
for autonomous vehicles with Dubins vehicle dynamics.
By means of numerical examples, we 
verified the claims in this paper and illustrated the influence of the heterogeneous nature of the regions in state space on the resulting minimum-time paths of bounded curvature.

\section{Acknowledgments}

This research has been partially supported by ARO through
grant W911NF-07-1-0499 and MURI grant W911NF-11-1-0046, by NSF through grants 0715025 and 
CAREER Grant ECS-1150306, and by AFOSR through grant FA9550-12-1-0366. 
The simulations in this research have been carried out using the optimal control software package GPOPS 
under a public license, which requires the citation of the following references:

{\tiny \parskip 2.5pt
[$\star$1]\ 
A.~V. Rao, D.~A. Benson, C.~Darby, M.~A. Patterson, C.~Francolin, I.~Sanders,
  and G.~T. Huntington.
Algorithm 902: {GPOPS}, {A} {MATLAB} software for solving
  multiple-phase optimal control problems using the gauss pseudospectral
  method.
{\em ACM Trans. Math. Softw.}, 37(2):22:1--22:39, April 2010.

[$\star$2]\ 
D. Garg, M. ~A. Patterson, W.~ W. Hager, A. ~V. Rao, D.~ A. Benson, and G.~ T. Huntington. A unified framework for the numerical solution of optimal control problems using pseudospectral methods. {\em Automatica}, pages 1843--1851, 2010.

[$\star$3]\ 
D. Garg, M. ~A. Patterson, C. Francolin, C.~ L. Darby, G.~ T. Huntington, W.~ W. Hager, and A.~ V. Rao. Direct trajectory optimization and costate estimation of finite-horizon and infinite-horizon optimal control problems using a radau pseudospectral method. {\em Computational Optimization and Applications}, 49(2):335--358, June 2011.

[$\star$4]\ 
C.~ L. Darby, W. ~W. Hager, and A. ~V. Rao. Direct trajectory optimization using a variable low-order adaptive pseudospectral method. {\em Journal of Spacecraft and Rockets}, 48(3):433--445, 2011.

[$\star$5]\ 
C. ~L. Darby, W. ~W. Hager, and A.~ V. Rao. An hp-adaptive pseudospectral method for solving optimal control problems. {\em Optimal Control Applications and Methods}, 49(2):476--502, 2011.

[$\star$6]\ 
D. Garg. {\em Advances in Global Pseudospectral Methods for Optimal Control}. PhD thesis, University of Florida, Department of
Mechanical and Aerospace Engineering, 2011.

[$\star$7]\ 
C.~L. Darby. {\em hp-Pseudospectral Method for Solving Continuous-Time Nonlinear Optimal Control Problems}. PhD thesis, University of Florida, Department of Mechanical and Aerospace Engineering, 2011.

[$\star$8]\ 
D.~A. Benson. {\em A Gauss Pseudospectral Transcription for Optimal Control}. PhD thesis, Massachusetts Institute of Technology, Department of Aeronautics and Astronautics, 2004.

[$\star$9]\ 
G.~T. Huntington. {\em Advancement and Analysis of a Gauss Pseudospectral Transcription for Optimal Control}. PhD thesis, Massachusetts Institute of Technology, Department of Aeronautics and Astronautics, 2007.

[$\star$10]\ 
D. ~A. Benson, G.~ T. Huntington, T.~ P. Thorvaldsen, and A.~ V. Rao. Direct trajectory optimization and costate estimation via an orthogonal collocation method. {\em Journal of Guidance, Control and Dynamics}, 29(6):1435--1440, 2006.

}

\vspace{-0.2cm}

\bibliographystyle{plain}
\bibliography{long,Biblio,RGS}

\end{document}